\theoremstyle{plain}
\newtheorem{lem}{Lemma}[section]
\newtheorem{thm}[lem]{Theorem}
\newtheorem{cor}[lem]{Corollary}
\theoremstyle{definition}
\newtheorem{exa}[lem]{Example}
\newtheorem{rem}[lem]{Remark}
\newtheorem{obs}[lem]{Observation}
\newtheorem{con}[lem]{Construction}
\newtheorem{defn}[lem]{Definition}
\newtheorem*{prob}{Problem}
\newtheorem{st}[lem]{Structure Theorem}
\newtheorem{ilu}[lem]{Illustration}
\theoremstyle{remark}
\numberwithin{equation}{section} \thispagestyle{empty} \voffset
\begin{document}
\baselineskip 15truept

\hfill{\it Accepted for publication in Mathematica Slovaca}\vspace{.21in}

\title{Zero-divisor graphs of lower dismantlable lattices-II}
\date{}
\author[Avinash Patil, B. N. Waphare \and Vinayak Joshi ] %
{Avinash Patil*, B. N. Waphare** \and  Vinayak Joshi**}

\newcommand{\acr}{\newline\indent}

\address{\llap{*\,}Department of Mathematics\acr
                   Garware College of Commerce\acr
                   Karve Road, Pune-411004\acr
                   India.}
\email{avipmj@gmail.com}

\address{\llap{**\,}Center For Advanced Studies In Mathematics\acr Department of Mathematics\acr
                   Savitribai Phule Pune University\acr
                   Pune-411007\acr
                   India.}

\email{bnwaph@math.unipune.ac.in; waphare@yahoo.com}
\email{vvj@math.unipune.ac.in; vinayakjoshi111@yahoo.com}

\begin{abstract}In this paper, we continue our study of the zero-divisor graphs of lower dismantlable lattices that was started in \cite{ap}. The present paper mainly deals with an Isomorphism Problem for the zero-divisor graphs of lattices. In fact, we prove that the zero-divisor graphs of lower dismantlable lattices with the greatest element 1 as join-reducible are isomorphic if and only if the lattices are isomorphic.
\end{abstract}
\maketitle
\noindent {\bf Keywords:} Dismantlable lattice, adjunct element, zero-divisor graph, rooted tree.\\
{\bf MSC(2010):} {Primary: $05$C$60$, Secondary: $05$C$25$}

\section{Introduction}
Beck \cite{2} introduced the concept  of zero-divisor graph of a
commutative ring $R$ with unity as follows. Let $G$ be a simple
graph whose vertices are the elements of $R$ and two vertices $x$
and $y$ are adjacent if $xy = 0$. The graph $G$ is known as the
\textit{zero-divisor graph} of $R$. He was mainly interested in
the coloring of this graph. This concept is well studied in
algebraic structures such as  semigroups, rings, lattices,
semi-lattices as well as in ordered structures such as posets and
qosets; see Alizadeh et al. \cite{1}, Anderson et al. \cite{DP},
Hala\v {s} and Jukl \cite{4}, Hala\v {s} and L\"{a}nger \cite{5},
 Joshi \cite{7}, Joshi and Khiste \cite{8,vjau},   Joshi, Waphare and Pourali
\cite{9, jwp}, Lu and Wu \cite{13} and
Nimbhorkar et al. \cite{15}.

It is easy to observe that if two posets $P_1$ and $P_2$ are isomorphic then their zero-divisor graphs $G_{\{0\}}(P_1)$ and $G_{\{0\}}(P_2)$ are isomorphic.
But the converse need not be true in general. Hence it is worth to study the following Isomorphism Problem.

\vspace{.5cm}
\noindent
{\bf Isomorphism Problem: Find a class of posets $\mathbf{\mathbb{P}}$ such that
$\mathbf{G_{\{0\}}(P_1)\cong G_{\{0\}}(P_2)}$ if and only if $\mathbf{P_1\cong P_2}$
for $\mathbf{P_1,P_2\in \mathbb{P}}$.}
\vspace{.5cm}

Joshi and Khiste \cite{8} solved Isomorphism Problem for Boolean posets, which essentially extends the result of LaGrange \cite{11}, see also Mohammadian \cite{14}.
 Recently Joshi, Waphare and Pourali \cite{9} solved this problem for the class of  section semi-complemented(SSC) meet semi-lattices.

In the sequel, we obtain a solution for the class of lower dismantlable lattices, a subclass of dismantlable lattices.

In this paper, we continue our study of the zero-divisor graphs of lower dismantlable lattices that was started in \cite{ap}. The present paper mainly deals with an Isomorphism Problem of zero-divisor graphs for lower dismantlable lattices.  In fact we prove:
\begin{thm}\label{t1} Let  $\mathscr{L}$ be the class of lower dismantlable lattices with the greatest element 1 as join-reducible. Then $G_{\{0\}}(L_1)\cong G_{\{0\}}(L_2)$ if and only if $L_1\cong L_2$ for $L_1,L_2\in \mathscr{L}$.\end{thm}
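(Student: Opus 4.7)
The easy direction is immediate: any lattice isomorphism $\varphi:L_1\to L_2$ preserves the meet operation and $0$, hence preserves the adjacency relation $x\wedge y=0$, so it induces an isomorphism of the zero-divisor graphs. The substantive content of the theorem is the converse, so I would focus the proof there.

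My plan is to reconstruct the lattice $L$ entirely from combinatorial invariants visible in $G_{\{0\}}(L)$, relying on the structural description of lower dismantlable lattices from \cite{ap}. A lower dismantlable lattice admits an adjunct decomposition $L=L_0\cup L_1\cup\cdots\cup L_n$, and in \cite{ap} this decomposition is encoded by a rooted tree whose vertices correspond to adjunct blocks. I would proceed in the following stages. First, extract from the graph the set of atoms of $L$ and, more generally, the twin-classes of vertices (vertices with identical open neighborhoods), since these classes correspond to sets of elements sharing an annihilator. Second, use the join-reducibility of $1$ to locate the coatoms: because $1$ is join-reducible, there exist at least two coatoms $c_1,c_2$ with $c_1\vee c_2=1$, and their annihilators can be characterized graph-theoretically; this is the step where the hypothesis on $1$ is used decisively, because it guarantees a certain asymmetry that would otherwise collapse. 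Third, identify the adjunct blocks and the rooted tree governing their attachment: each adjunct block should manifest in $G_{\{0\}}(L)$ as a distinguished induced subgraph (e.g., complete bipartite or similar, as established in the preceding paper), and the hierarchical nesting of blocks should be recoverable from containment of neighborhoods.

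Once the rooted tree and the blocks are recovered from $G_{\{0\}}(L_1)$, I would transport these data under a graph isomorphism $\Phi:G_{\{0\}}(L_1)\to G_{\{0\}}(L_2)$ to obtain the corresponding tree and blocks for $L_2$, and then invoke the structural uniqueness statement from \cite{ap} that a lower dismantlable lattice is determined up to isomorphism by its rooted tree of adjunct blocks. This yields $L_1\cong L_2$.

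I expect the main obstacle to be Step two, the invariant characterization of coatoms and the verification that the hypothesis ``$1$ is join-reducible'' does the work needed to rule out pathological cases. When $1$ is join-irreducible (a single coatom), the zero-divisor graph loses the comparative information between the top of different adjunct chains, and in general the Isomorphism Problem fails; making explicit what breaks, and what is preserved precisely because of join-reducibility, is the delicate part. I would address it by isolating, for each $L\in\mathscr{L}$, the maximal cliques in $G_{\{0\}}(L)$ corresponding to maximal sets with pairwise zero meets, relating them to the coatoms above a given element, and using the presence of at least two coatoms to pin down the ``top layer'' of the lattice from the graph. All remaining steps should then be formal applications of the reconstruction machinery from \cite{ap}.
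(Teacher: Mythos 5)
Your easy direction is fine, and your overall instinct --- recover the rooted tree of the adjunct decomposition from the graph and transport it along the graph isomorphism --- is the same global strategy the paper follows. But the proposal has a genuine gap at exactly the point where the theorem is hard. By Theorem \ref{rp}, $G_{\{0\}}(L)$ is the incomparability graph of $L\setminus\{0,1\}$, and $L$ is trivially determined by its rooted tree $T$ (one just attaches $0$ below the pendant vertices). So the entire content of the theorem is that the \emph{non-ancestor graph} $G(T)$ determines the tree $T$, i.e.\ that the order relation can be recovered from incomparability data. Your step three (``the hierarchical nesting of blocks should be recoverable from containment of neighborhoods'') and your final appeal to ``the structural uniqueness statement from \cite{ap}'' assume precisely this; the corollary $G(T_1)\cong G(T_2)\Rightarrow T_1\cong T_2$ is a \emph{consequence} of Theorem \ref{t1} in the paper, not an available input. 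Recovering a poset from its (in)comparability graph is false in general, so this step cannot be waved through.

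Concretely, two things are missing that the paper has to work for. First, a graph isomorphism need not send adjunct elements to adjunct elements: each twin class is a chain containing at most one adjunct element, namely its least element (Lemma \ref{l14}), and the graph cannot see which member of the class that is. The paper's Corollary \ref{701} gives a purely graph-theoretic test for whether a class \emph{contains} an adjunct element, and Theorem \ref{703} then modifies the given isomorphism by swapping elements within classes until adjunct elements correspond; your sketch has no analogue of this correction step. Second, the actual reconstruction is an induction (Theorem \ref{706}): peel off a twin class $[x]$ containing no adjunct element, write $L=L_1']_0^a C$ with $C=[x]$ via Lemma \ref{l16}, show the attachment point $a$ is matched by the isomorphism, and extend the lattice isomorphism obtained on the smaller lattices. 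Your proposed invariants (coatoms located via join-reducibility of $1$, maximal cliques) are not the ones that make this work; the hypothesis that $1$ is join-reducible is used only to guarantee $(0,1)$ is an adjunct pair, so that $V(G_{\{0\}}(L))=L\setminus\{0,1\}$ and the graph sees the whole lattice. Without filling in the class-correction and the inductive peeling argument, the proof is not complete.
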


Rival \cite{17} introduced dismantlable lattices to study the
combinatorial properties of doubly irreducible elements. By a 
dismantlable lattice, we mean a lattice which can be completely
``dismantled" by removing one element at each stage. Kelly and Rival
\cite{10} characterized dismantlable lattices by means of crowns,
whereas Thakare, Pawar and Waphare \cite{18} gave a structure
theorem for dismantlable lattices using adjunct operation.

Now, we begin with the necessary definitions and terminology.

 First, we define the covering relation. We say that `{\it $a$ is covered by $b$}', if there is no $c$ such that $a<c<b$ and we denote it by $a\prec b$. Further, $a$ is a lower cover of $b$ and $b$ is an upper cover of $a$.
\begin{defn}[Thakare et al. \cite{18}]\label{d1}
Let $L_1$ and $L_2$ be two disjoint finite lattices and $\; (a, b)\; $ is a pair
of elements in $L_1$ such that $a < b$ and $a \not\prec b$. Define the
partial order $\leq$ on the set $L = L_1 \cup L_2$ with respect to
the pair $(a,b)$ as follows. For $x,y\in L$, we say $x \leq y$ in $L$ if

 either $x,y \in L_1$ and $x \leq y$ in $L_1$; \hspace{1.32cm} or $x,y \in L_2$ and $x \leq y$ in $L_2$;

or $x \in L_1,$ $ y \in L_2$ and $x \leq a$ in $L_1$; \hspace{1cm}
or $x \in L_2,$ $ y \in L_1$ and $b \leq y$ in $L_1$.\\
Notice that $L$ is a lattice containing $L_1$ and $L_2$
as sublattices. The procedure of obtaining $L$ in this way is
called an {\it adjunct operation of $L_2$ to $L_1$}. The pair $(a,b)$
is called {\it an adjunct pair } and $L$ is an {\it adjunct }
of $L_2$ to $L_1$ with respect to the adjunct pair ($a,b$) and we
write $L = L_1 ]^b_a L_2$ (see Figure \ref{f1}).
\end{defn}
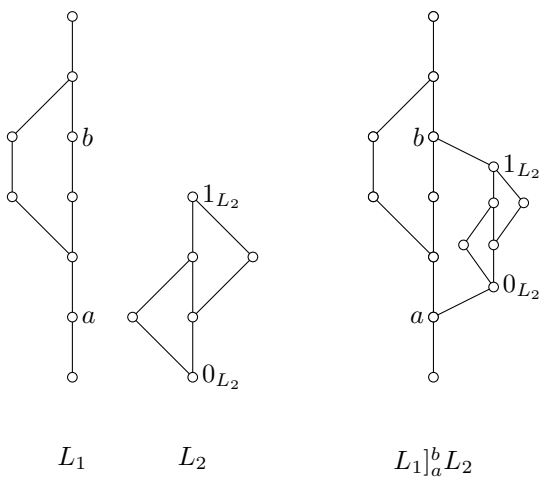
\begin{figure}[h]
\begin{tikzpicture}[scale=.8]
\draw (0,0)--(0,6); \draw (0,2)--(-1,3)--(-1,4)--(0,5);
\draw[fill=white](0,0) circle(.08); \draw[fill=white](0,1)
circle(.08); \draw[fill=white](0,2) circle(.08);
\draw[fill=white](0,3) circle(.08); \draw[fill=white](0,4)
circle(.08); \draw[fill=white](0,5) circle(.08);
\draw[fill=white](0,6) circle(.08); \draw[fill=white](-1,3)
circle(.08); \draw[fill=white](-1,4) circle(.08); \node [right] at
(0,1){$a$}; \node [right] at (0,4){$b$}; \node [below] at
(0,-1){$L_1$};

\draw (2,0)--(2,3); \draw (2,0)--(1,1)--(2,2); \draw
(2,1)--(3,2)--(2,3); \draw[fill=white](2,0) circle(.08);
\draw[fill=white](2,1) circle(.08); \draw[fill=white](2,2)
circle(.08); \draw[fill=white](2,3) circle(.08);
\draw[fill=white](1,1) circle(.08); \draw[fill=white](3,2)
circle(.08); \node [below] at (2,-1){$L_2$}; \node [right] at
(2,0){$0_{\tiny L_2}$};\node [right] at (2,3){$1_{\tiny L_2}$};

\draw (6,0)--(6,6); \draw (6,2)--(5,3)--(5,4)--(6,5);
\draw[fill=white](6,0) circle(.08); \draw[fill=white](6,1)
circle(.08); \draw[fill=white](6,2) circle(.08);
\draw[fill=white](6,3) circle(.08); \draw[fill=white](6,4)
circle(.08); \draw[fill=white](6,5) circle(.08);
\draw[fill=white](6,6) circle(.08); \draw[fill=white](5,3)
circle(.08); \draw[fill=white](5,4) circle(.08);

\draw (6,1)--(7,1.5); \draw (6,4)--(7,3.5); \draw
(7,1.5)--(7,3.5); \draw (7,1.5)--(6.5,2.2)--(7,2.9); \draw
(7,2.2)--(7.5,2.9)--(7,3.5); \draw[fill=white](7,1.5) circle(.08);
\draw[fill=white](7,2.2) circle(.08); \draw[fill=white](7,2.9)
circle(.08); \draw[fill=white](7,3.5) circle(.08);
\draw[fill=white](6.5,2.2) circle(.08); \draw[fill=white](7.5,2.9)
circle(.08); \node [left] at (6,1){$a$}; \node [left] at
(6,4){$b$}; \draw[fill=white](6,1) circle(.08);
\draw[fill=white](6,2) circle(.08); \draw[fill=white](6,3)
circle(.08); \draw[fill=white](6,4) circle(.08);
\draw[fill=white](6,5) circle(.08); \draw[fill=white](6,6)
circle(.08); \draw[fill=white](5,3) circle(.08);
\draw[fill=white](5,4) circle(.08); \node [below] at (6,-1)
{$L_1]_a^bL_2$}; \node [right] at (7,1.5){$0_{\tiny L_2}$};\node
[right] at (7,3.5){$1_{\tiny L_2}$};
\end{tikzpicture}
\caption{Adjunct of two lattices $L_1$ and $L_2$}\label{f1}
\end{figure}
We use the following definition of zero-divisor graph.
\begin{defn}[Joshi \cite{7}]\label{joshi} Let $L$ be a lattice with 0. We associate a simple undirected graph $G_{\{0\}}(L)$
to $L$. The set of  vertices of $G_{\{0\}}(L)$ is
$V(G_{\{0\}}(L))=\big\{x \in L \setminus \{0\} |  x\wedge
y=0$ for some $y \in L\setminus \{0\} \big\}$ and two  distinct
vertices $x, y$ are adjacent if and only if $x\wedge y=0$. The
graph $G_{\{0\}}(L)$ is called the {\it zero-divisor graph} of
$L$.
\end{defn}
 The following result is essentially due to Joshi \cite[Theorem 2.4]{7}.
\begin{thm}[{Joshi \cite[Theorem 2.4]{7}}]\label{403}
	Let $L$ be a lattice with $V(G_{\{0\}}(L))\ne \emptyset$.  Then $G_{\{0\}}(L)$ is connected with $diam(G_{\{0\}}(L)) \leq 3$.
\end{thm}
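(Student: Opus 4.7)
The plan is to prove the theorem by a direct case analysis: given any two distinct vertices $x, y \in V(G_{\{0\}}(L))$, I will produce a path of length at most $3$ joining them. By the definition of $V(G_{\{0\}}(L))$, there exist nonzero elements $x', y' \in L$ with $x \wedge x' = 0$ and $y \wedge y' = 0$; these two elements will be the building blocks from which every path is assembled.

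First, if $x \wedge y = 0$, then $x$ and $y$ are adjacent, so $d(x,y)=1$ and there is nothing to prove. Assume therefore that $x \wedge y \neq 0$, and set $z := x' \wedge y'$. I would split on whether $z$ vanishes. If $z \neq 0$, then using associativity and commutativity of $\wedge$ together with $x \wedge x' = 0 = y \wedge y'$, one gets $x \wedge z = (x \wedge x') \wedge y' = 0$ and $y \wedge z = x' \wedge (y \wedge y') = 0$, so $z$ is a common $\wedge$-$0$-partner of both $x$ and $y$. A short check that $z \notin \{x,y\}$ (if $z = x$, then $x \leq x'$ forces $x = x \wedge x' = 0$, a contradiction, and symmetrically for $y$) shows that $x - z - y$ is a genuine path of length $2$.

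If instead $z = x' \wedge y' = 0$, then $x'$ and $y'$ are themselves adjacent in $G_{\{0\}}(L)$, and I would propose the path $x - x' - y' - y$ of length $3$. To verify it is a genuine path I must rule out coincidences among its vertices: $x' \neq y$ (else $x \wedge y = x \wedge x' = 0$, contradicting the standing assumption), $y' \neq x$ symmetrically, and $x' \neq y'$ (else $x' = y' = x' \wedge y' = 0$, contrary to $x' \neq 0$). Combined with the standing hypothesis $x \neq y$, this gives four distinct vertices, and the edges $x \sim x'$, $x' \sim y'$, $y' \sim y$ are all present by construction.

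Every case yields $d(x,y) \leq 3$, so $G_{\{0\}}(L)$ is connected with diameter at most $3$. The argument is largely a bookkeeping exercise; the only subtlety worth flagging is the collision checks needed to certify that the proposed walks are actual paths, and these are handled uniformly by observing that any forced equality would propagate through $\wedge$ to produce either $x \wedge y = 0$ (violating the case hypothesis) or a nonzero element equal to $0$.
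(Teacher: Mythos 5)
Your argument is correct and is the standard Anderson--Livingston-style diameter argument; note that the paper itself states this result only as a citation of Joshi \cite[Theorem~2.4]{7} without reproducing a proof, and the cited source's proof proceeds in essentially the same way (the case split on $x\wedge y$ and on $x'\wedge y'$, with the same collision checks). The only detail you left implicit, that $x\neq x'$ and $y\neq y'$, follows at once from $x\wedge x'=0$ with $x\neq 0$, so there is no gap.
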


An element $x$ in a lattice $L$ is \textit{join-reducible $($meet-reducible$)$} in $L$
 if there exist $y,z\in L$ both distinct from $x$, such that $y\vee z=x$ $(y\wedge z= x)$; $x$ is
 \textit{join-irreducible $($meet-irreducible$)$} if it is not join-reducible $($meet-reducible$)$;
 $x$ is \textit{doubly irreducible} if it is both join-irreducible and meet-irreducible.
 Therefore, an element $x$ is doubly irreducible in a finite lattice $L$ if and only if $x$ has just one
 lower cover as well as just one upper cover. The set of all join-irreducible $($meet-irreducible$)$
 elements of $L$ is denoted by \textit{$J(L)$ $(M(L))$}.  From the definitions of join-irreducibility and meet-irreducibility, it is clear that $0\in J(L)$ and $1\in M(L)$.
 The set of all doubly irreducible elements of $L$ is
 denoted by $Irr(L)$ and the set of doubly reducible elements of $L$ is denoted by \textit{$Red(L)$}. Thus, if $x\in Red(L)$ then $x$
 is either join-reducible or meet-reducible. A nonzero element $p$ of a lattice $L$ with 0 is an {\it atom} if $0\prec p$.
 The set of atoms  in a lattice $L$ is denoted by $At(L)$.

The \textit{cover graph} of a lattice $L$, denoted by
     $CG(L)$, is the graph whose vertices are the elements of $L$ and whose edges are the pairs $(x,y)$ with $x,y\in L$
     satisfying $x\prec y$ or $y\prec x$. The edge set of $CG(L)$ is denoted by $E(CG(L))$.  The \textit{comparability graph} of a lattice $L$, denoted by $C(L)$, is the graph whose
     vertices are the elements of $L$ and two vertices $x$ and $y$ are adjacent if and only if $x$ and $y$ are comparable.
     The complement of the comparability graph,  $C(L)^c$, is called the \textit{incomparability graph} of $L$.

 Let $x$ be a vertex of a graph $G$. The set of neighbors of $x$ in $G$, denoted by $N(x)$, is given  by
 $\{y\in V(G)|~ x \textnormal{ and }y \textnormal{ are adjacent in } G\}$. The relation defined by $x\sim y$
 if and only if $N(x)=N(y)$ is an equivalence relation on $V(G)$. The equivalence class $[x]$ of $G$ is given by $[x]=\{y\in V(G)|y\sim x\}$. For undefined notions and terminology  from Graph Theory, the reader is referred to West \cite{6}.

\begin{defn}[Rival \cite{17}]A finite lattice $L$ having $n$ elements is {\it dismantlable},
if there exists a chain $L_1 \subset L_2 \subset \cdots \subset
L_n (= L)$ of sublattices of $L$ such that $| L_i| = i$, for all
$i$.\end{defn}

\begin{st}[{Thakare et al. \cite[Theorem 2.2]{18}}]\label{76}A lattice is  dismantlable if and only if it is an adjunct of chains.\end{st}

Thus a dismantlable lattice is of the form $L=(\cdots((C_0]_{a_1}^{b_1}C_1)]_{a_2}^{b_2}C_2)\cdots)]_{a_r}^{b_r} C_r$, {\it i.e.},
 start with the chain $C_0$ adjoin $C_1$ between the pair $(a_1,b_1)$ and so on. If the context is clear,
 from now onwards, we will simply write $L=C_0]_{a_1}^{b_1}C_1]_{a_2}^{b_2}\cdots]_{a_r}^{b_r}C_r$.

Thakare, Pawar and Waphare  \cite{18} proved that a dismantlable lattice $L$ need not have a unique adjunct representation but an adjunct pair $(a, b)$ occurs the same number of times in any adjunct representation of $L$.
\begin{thm}[{Thakare et al. \cite[Theorem 2.7]{18}}]A pair $(a, b)$ occurs $r$ times in an adjunct representation of
a dismantlable lattice $L$ if and only if there exist exactly $r + 1$ maximal chains
$C_0',C_1',\cdots, C_r'$ in $[a, b]$ such that $x\wedge y = a$ and $x\vee y = b$ for any
$x\in  C_i'\backslash \{a,b\}$, $y\in C_j'\backslash \{a,b\}$,  $i\neq j$.
\end{thm}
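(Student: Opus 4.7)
I plan to induct on the total number $k$ of adjunctions in an adjunct representation of $L$. The result stated just above the theorem guarantees that the multiplicity $r$ of the pair $(a,b)$ is representation-invariant, so the induction is well-posed. The base case $k=0$ is immediate: $L$ is a chain, $r=0$, and $[a,b]$ contains a unique maximal chain, vacuously giving a family of $r+1 = 1$ chains.

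For the inductive step I write $L = L' ]_c^d C$ and split into three cases depending on where the new chain sits with respect to $[a,b]$. First, if $[c,d] \not\subseteq [a,b]$, the interval $[a,b]_L$ coincides as a sublattice with $[a,b]_{L'}$ and $r$ is unchanged, so induction finishes. Second, if $(c,d)=(a,b)$, the count drops to $r-1$ in $L'$; by induction there is a transversal family $\{C_0',\dots,C_{r-1}'\}$ in $[a,b]_{L'}$, and I will extend it by $C_r' := \{a\} \cup C \cup \{b\}$. The required meet/join identities between elements of $C$ and interior elements of $C_i'$ fall out of Definition~\ref{d1}: any $x \in C$ and $y \in [a,b]_{L'} \setminus \{a,b\}$ are incomparable with $x \wedge y = a$ and $x \vee y = b$ in $L$. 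Maximality at $r+1$ follows because the only maximal chain of $[a,b]_L$ that meets $C$ is $C_r'$ itself, since elements of $C$ are comparable within $[a,b]_L$ only to each other and to $a, b$.

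The third case, $[c,d] \subsetneq [a,b]$ with $(c,d) \neq (a,b)$, is where I expect the main obstacle. Here $r$ is preserved, and by induction $L'$ already has a transversal family of size $r+1$ which remains transversal in $L$. Any new maximal chain in $[a,b]_L$ through $C$ must contain both $c$ and $d$, with at least one of them a strict interior point of $[a,b]$; say $c > a$. I will argue that no such new chain can enlarge the transversal family. If an old chain in the family contains $c$ (or $d$) the shared interior element immediately breaks transversality. If an old chain contains an element $y$ with $a < y < c$ or $c < y < b$, the meet or join of $y$ with $c$ reduces to $y$ or $c$ rather than $a$ or $b$, again breaking transversality. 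The delicate subcase is when $c$ is incomparable to every interior element of a given old chain with all pairwise meets equal to $a$; I plan to handle this by appealing to the structural theorem and showing that in such a configuration $(c,d)$ would have to be an occurrence of the pair $(a,b)$, contradicting the case hypothesis. The converse direction of the theorem is then formal: combining the forward implication with the invariance of $r$ shows that the existence of exactly $r+1$ such chains forces the multiplicity to be exactly $r$.
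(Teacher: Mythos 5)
First, note that the paper does not prove this statement at all: it is quoted as a known result from Thakare, Pawar and Waphare \cite[Theorem 2.7]{18}, so there is no in-paper argument to compare yours against; your proposal has to stand on its own.

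On its own terms, your induction is set up sensibly and Cases (1) and (2) go through: if $[c,d]\not\subseteq[a,b]$ then no element of the new chain lands in $[a,b]_L$, and if $(c,d)=(a,b)$ your verification that $\{a\}\cup C\cup\{b\}$ is transversal to every old chain and that any maximal chain of $[a,b]_L$ meeting $C$ equals $\{a\}\cup C\cup\{b\}$ is correct (you should also record that the old chains remain \emph{maximal} in $[a,b]_L$, which uses $c\not\prec d$, but that is minor). The genuine gap is exactly where you flagged it, in Case (3), and your plan for closing it does not work as stated. When $a<c$ and $d<b$ there is a clean squeeze: any $w$ with $c<w<d$ satisfies $w\wedge z\le d\wedge z=a$ and $w\vee z\ge c\vee z=b$ for every interior element $z$ of every old chain, so a hypothetical transversal chain through $C$ immediately yields $r+2$ pairwise transversal maximal chains inside $[a,b]_{L'}$, contradicting the inductive bound. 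But in the boundary subcase where exactly one of $c,d$ is an endpoint of $[a,b]$ (say $d=b$ and $c>a$), the squeeze is unavailable: transversality of the new chain only tells you that a maximal chain $D_1$ of $[a,c]$ is transversal to the old family, and to contradict the inductive hypothesis you must extend $D_1$ beyond $c$ up to $b$ inside $L'$ by a chain all of whose interior elements still have meet $a$ with every old interior element --- the join condition is automatic above $c$, but the meet condition is not, and nothing in your sketch produces such an extension. Your stated fallback, ``show that $(c,d)$ would have to be an occurrence of the pair $(a,b)$,'' is not coherent as written ($(c,d)$ is a fixed pair different from $(a,b)$ by the case hypothesis) and is not supported by any argument. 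Separately, be careful about invoking the representation-invariance of $r$ as an input to make the induction ``well-posed'': in \cite{18} that invariance is obtained as a \emph{consequence} of this very theorem (the chain count is manifestly representation-free), so you risk circularity; the fix is simply to run the induction over adjunct representations of a given length and deduce invariance afterwards.
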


Now, we recall the definition of a lower dismantlable lattice and
its properties from \cite{ap}.

\begin{defn}We call a dismantlable lattice $L$ to be a \textit{lower dismantlable}, if it is a chain or every adjunct pair in $L$ is of the form $(0,b)$ for some $b\in L$.
\end{defn}
The adjunct representation of a lower dismantlable lattice is of the type
$L=C_0]_0^{x_1}C_1]_0^{x_2}\cdots]_0^{x_n}C_n$, where $C_i$'s are chains. We call an element $x$ of a lower
dismantlable lattice $L$ an {\it adjunct element} if $(0,x)$ is an adjunct pair in $L$.

\begin{rem}\label{r1}
In a lower dismantlable lattice there is no nonzero meet-reducible element.
\end{rem}

 The following lemma is proved in \cite{ap} and gives the properties of lower dismantlable
 lattices which will be used frequently in sequel.
\begin{lem} \label{400}Let $L=C_0]_0^{x_1}C_1]_0^{x_2}\cdots]_0^{x_n}C_n$ be a lower dismantlable lattice, where $C_i$'s are chains. Then for nonzero elements $a,b\in L$,
the following statements are true.
\begin{enumerate}
\item[$a)$] $a\wedge b=0$ if and only if $a||b$ (where $a||b$ means $a$ and $b$ are incomparable);
\item[$b)$] Let $a\in C_i$, $b\in C_j$ and $i\neq j$. Then $a\leq b$ if and
only if $a=0$, whenever $i<j$;\\
$a\leq b$ if and
only if $x_i\leq b$, whenever $j<i$;
\item[$c)$] If $(0,1)$ is an adjunct pair $(${\it i.e.}, $x_i=1$ for
some $i\in \{1,2,\cdots, n\})$, then
$|V\left(G_{\{0\}}(L)\right)|=|L|-2$.
\end{enumerate}\end{lem}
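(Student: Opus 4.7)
The plan is to prove the three parts in the order (b), (a), (c). Part (b) records how the adjunct construction distributes order relations across different chains, and I would use it as the workhorse for both (a) and (c).

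For (b), assume without loss of generality that $i<j$. At the step when $C_j$ is adjoined to $L_{j-1}=C_0]_0^{x_1}C_1]\cdots]_0^{x_{j-1}}C_{j-1}$ with adjunct pair $(0,x_j)$, the element $a\in C_i$ already lies in $L_{j-1}$, and Definition \ref{d1} applied directly will yield $a\leq b$ iff $a\leq 0$ in $L_{j-1}$ (i.e., $a=0$), and $b\leq a$ iff $x_j\leq a$ in $L_{j-1}$. These relations are frozen thereafter, since each subsequent adjunct only attaches a new disjoint chain without altering the order on previously placed elements. Swapping the roles of $i$ and $j$ yields the second case.

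For (a), note first that $a\wedge b=0\Rightarrow a||b$ is immediate for nonzero $a,b$, as two comparable nonzero elements have a nonzero meet. The converse I would prove by induction on $n$, writing $L=L_{n-1}]_0^{x_n}C_n$. If incomparable $a,b$ both lie in $L_{n-1}$, the inductive hypothesis gives $a\wedge b=0$ in $L_{n-1}$, and any new common lower bound $c\in C_n$ would, by (b), force $x_n\leq a$ and $x_n\leq b$, yielding $x_n\leq a\wedge b=0$ and contradicting $x_n\neq 0$. If $a\in L_{n-1}$ and $b\in C_n$ are incomparable, then $x_n\not\leq a$, so no element of $C_n$ sits below $a$, while any lower bound in $L_{n-1}$ below $b$ must equal $0$. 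The case $a,b\in C_n$ is vacuous as $C_n$ is a chain.

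For (c), I would show that $V(G_{\{0\}}(L))=L\setminus\{0,1\}$, which immediately gives $|V(G_{\{0\}}(L))|=|L|-2$. Here $0$ is excluded by definition and $1$ is excluded because $1\wedge y=y\neq 0$ for every nonzero $y$. For $a\in L\setminus\{0,1\}$, I would exhibit a nonzero $y$ with $a||y$ and invoke (a), arguing by induction on $n$. In the base case $n=i$ (so $L=L_{i-1}]_0^1 C_i$), take $y\in C_i$ nonzero when $a\in L_{i-1}\setminus\{0,1\}$, and take $y\in L_{i-1}\setminus\{0,1\}$ when $a\in C_i$; such a $y$ exists because the adjunct pair $(0,1)$ forces $0\not\prec 1$ in $L_{i-1}$. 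For $n>i$, an element $a\in L_{n-1}\setminus\{0,1\}$ inherits its partner from $L_{n-1}$ by induction (relations are preserved under adjunct), while an element $a\in C_n$ can be paired with any $y$ strictly between $0$ and $x_n$ in $L_{n-1}$, which exists because $x_n$ is not an atom; then $y\neq 0$ gives $y\not\leq a$, and $y<x_n$ gives $x_n\not\leq y$ so that $a\not\leq y$. The main obstacle I anticipate is exactly this bookkeeping in (c), namely verifying that a partner chosen at one inductive stage remains a valid (nonzero, incomparable) partner in the full $L$; the induction on $n$ keeps the case analysis shallow by handling one newly attached chain at a time.
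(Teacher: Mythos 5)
Your proposal is correct. Note that the paper itself gives no proof of Lemma \ref{400} --- it is quoted verbatim from the companion paper \cite{ap} --- so there is no in-paper argument to compare against; your derivation of (b) directly from Definition \ref{d1} (with the observation that adjoining later chains never creates new relations among previously placed elements), the induction on $n$ for (a), and the identification $V(G_{\{0\}}(L))=L\setminus\{0,1\}$ in (c) using the fact that an adjunct pair $(0,x_i)$ forces $0\not\prec x_i$ are exactly the standard arguments one would expect the cited source to contain, and all the steps check out.
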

Next result gives the structure of the zero-divisor graph of lower dismantlable lattices.
\begin{thm}\label{704}A simple undirected graph $G$ is complete $k$-partite if and only if $G=G_{\{0\}}(L)$ for some lower dismantlable lattice $L$ in which $1$ is the only adjunct element.
\end{thm}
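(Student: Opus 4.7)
My plan is to pass through the adjunct representation of $L$ and read off the incomparability, and hence adjacency, structure using Lemma \ref{400}; for the converse I build $L$ directly from $G$.

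For the ``if'' direction, assume $L$ is a lower dismantlable lattice in which $1$ is the only adjunct element. Then every adjunct pair is $(0,1)$, so the adjunct representation takes the form $L=C_0]_0^1 C_1]_0^1\cdots]_0^1 C_n$ with $n\ge 1$ and each $C_i$ a chain. Since $(0,1)$ is an adjunct pair, Lemma \ref{400}(c) gives $V(G_{\{0\}}(L))=L\setminus\{0,1\}$, and I partition this vertex set as $V_0:=C_0\setminus\{0,1\}$ together with $V_i:=C_i$ for $1\le i\le n$; all parts are nonempty since $|C_0|\ge 3$ (needed so that $0\not\prec 1$ holds in $C_0$) and each $C_i$ is a nonempty chain. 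Within a single $V_i$ the elements are totally ordered, so Lemma \ref{400}(a) forbids edges; for $u\in V_i$ and $v\in V_j$ with $i\ne j$, Lemma \ref{400}(b) applied with $x_i=x_j=1$ forces $u\|v$, so every cross-pair is an edge. This exhibits $G_{\{0\}}(L)$ as complete $(n+1)$-partite with parts $V_0,\dots,V_n$.

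For the ``only if'' direction, let $G$ be complete $k$-partite with parts $A_1,\dots,A_k$. I construct $L$ by taking $C_0$ to be a chain whose interior (strictly between $0$ and $1$) is $A_1$ in any linear order, and, for $2\le i\le k$, by taking $C_{i-1}$ to be a chain whose underlying set is $A_i$ in any linear order. Set $L:=C_0]_0^1 C_1]_0^1\cdots]_0^1 C_{k-1}$. Since $|C_0|=|A_1|+2\ge 3$, the first adjunction $(0,1)$ is legitimate, and each subsequent adjunction only inserts new elements strictly between $0$ and $1$, so $0\not\prec 1$ is preserved throughout. By construction $1$ is the only adjunct element of $L$, and applying the ``if'' direction identifies $G_{\{0\}}(L)$ as a complete $k$-partite graph with precisely the parts $A_1,\dots,A_k$, whence $G\cong G_{\{0\}}(L)$.

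Neither direction is technically heavy. The main mild obstacle is handling the case where some $A_i$ with $i\ge 2$ is a singleton, in which the corresponding $C_{i-1}$ must be a one-element chain; this is permitted by Definition \ref{d1}, which only requires each summand to be a finite lattice, and the resulting adjunction simply inserts one new element strictly between $0$ and $1$ that is incomparable to every interior element of the preceding chains. Granting this, both directions reduce to routine bookkeeping with Lemma \ref{400}.
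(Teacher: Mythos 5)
Your proposal is correct and follows essentially the same route as the paper: build $L=C_0]_0^1C_1]_0^1\cdots]_0^1C_{k-1}$ from the partite sets for one direction, and read off the partite sets as $C_0\setminus\{0,1\},C_1,\dots,C_n$ via Lemma \ref{400} for the other. You merely supply more of the bookkeeping (the $|C_0|\ge 3$ requirement, the cross-chain incomparability check, singleton parts) that the paper leaves as ``clearly.''
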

\begin{proof}Let $G$ be a complete $k$-partite graph with $V_1,V_2,\cdots,V_k$ as partite
    sets. Without loss of generality, we can assume that $|V_i|\geq |V_{i+1}|$, for $i=1,2,\cdots,k-1$. Let $C_i$ be a chain
    such that $|C_i|=|V_i|$, for $i=2,3,\cdots,k$ and $|C_1|=|V_1|+2$. Then form a lattice
    $L=C_1]_0^1C_2]_0^1\cdots]_0^1C_k$. Clearly, $L$ is a lower dismantlable lattice with $(0,1)$ as the only adjunct
    pair such that $G_{\{0\}}(L)=G$.

    Conversely, suppose that $L$ is a lower dismantlable lattice with 1 as the only adjunct element. Hence $L=C_1]_0^1C_2]_0^1\cdots]_0^1C_k$. Then
    $G_{\{0\}}(L)\neq \emptyset$. In fact, by Lemma \ref{400}, two vertices are adjacent if
    and only if they belong to different chains in the adjunct
    representation of $L$. Therefore $G_{\{0\}}(L)$ is complete
    $k$-partite with $C_1\backslash \{0,1\}, C_2, \cdots,C_k$ as
    partite sets.
\end{proof}

 \begin{defn}\label{d2}Let $L$ be a finite lattice and $x\in L$. We say that $x$ is a {\it structurally deletable element}
 of $L$ if $x\in Irr(L\backslash \{0,1\})$ and $|E(CG(L))|=|E(CG(L\backslash \{x\}))|+1$. Delete the  structurally deletable
 element from $L$ and perform the operation of deletion till there does not remain any structurally deletable element.
 The resultant sublattice of $L$  is called the {\it basic block associated to $L$} and it is denoted by $B(L)$.
 \end{defn}

 \begin{ilu}In Figure \ref{f2}, a lower dismantlable lattice $L$ and its basic block $B(L)$ from $L$ are depicted. The procedure of obtaining $B(L)$ from $L$ is explained below.\\
 Since $C:a_1\prec a_2\prec a_3\prec a_4$ is a maximal chain of doubly irreducible elements with
  $0\prec a_1$, $a_4\prec x_1$ and $0,x_1\in Red(L)$. Then by Definition \ref{d2}, we remove the elements of $C$ except $a_1$. Repeat this procedure for the chains $a_9\prec a_{10}\prec a_{11}$, $a_6\prec a_7$,
  $x\prec a_8$. Also remove 1. Lastly, we get the basic block $B(L)$ associated to the lattice $L$.
 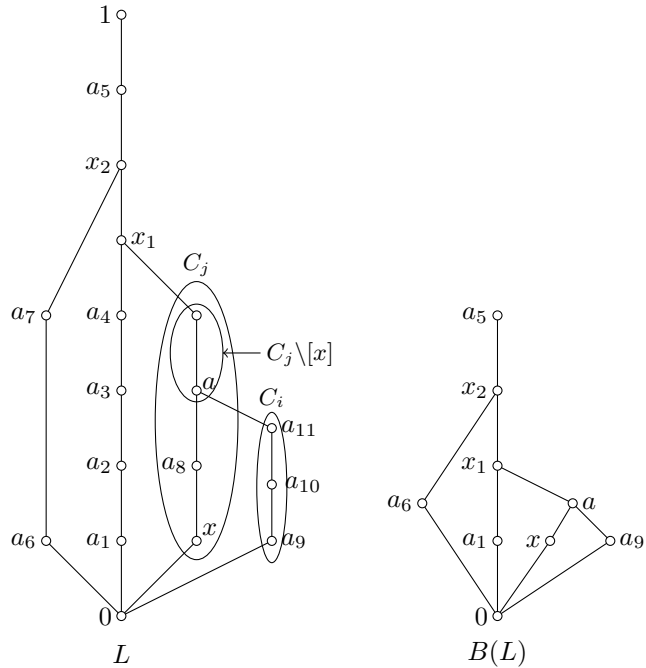
\begin{figure}[h]
 \begin{tikzpicture}
 \draw (0,0)--(0,8); \draw (0,0)--(-1,1)--(-1,4)--(0,6);
 \draw (0,5)--(1,4)--(1,1)--(0,0); \draw (1,3)--(2,2.5)--(2,1)--(0,0);

 \draw[fill=white](0,7) circle(.06);\draw[fill=white](0,8) circle(.06);
  \draw[fill=white](1,4) circle(.06); \draw[fill=white](1,3) circle(.06); \draw[fill=white](1,2) circle(.06);
   \draw[fill=white](1,1) circle(.06); \draw[fill=white](2,2.5) circle(.06); \draw[fill=white](2,1) circle(.06);
    \draw[fill=white](2,1.75) circle(.06);
 \draw[fill=white](0,0) circle(.06); \draw[fill=white](0,1)
 circle(.06); \draw[fill=white](0,2) circle(.06);
 \draw[fill=white](0,3) circle(.06); \draw[fill=white](0,4)
 circle(.06); \draw[fill=white](0,5) circle(.06);
 \draw[fill=white](0,6) circle(.06); \draw[fill=white](-1,1)
 circle(.06); \draw[fill=white](-1,4) circle(.06);

 \draw (1,3.5) ellipse (.35
 and .65); \draw[<-] (1.35,3.5)--(1.85,3.5);
 \draw (1,2.6) ellipse (.55
  and 1.85); \draw (2,1.71) ellipse (.2
   and 1);

  \node [left] at
  (0,0){$0$}; \node [left] at
  (0,1){$a_1$}; \node [left] at
  (0,2){$a_2$}; \node [left] at
  (0,3){$a_3$}; \node [left] at
  (0,4){$a_4$};
   \node [left] at
   (0,6){$x_2$}; \node [left] at
   (0,7){$a_5$};
  \node [right] at
  (0,5){$x_1$}; \node [left] at
  (0,8){$1$}; \node [left] at
  (-1,1){$a_6$}; \node [left] at
  (-1,4){$a_7$};

   \node [right] at
   (2,1){$a_9$};
    \node [left] at
    (1,2){$a_8$}; \node [right] at
    (2,2.5){$a_{11}$}; \node [right] at
    (2.05,1.75){$a_{10}$};

  \node [right] at
  (.95,3.1){$a$};
 \node [above] at
   (2.38,3.2){\small $C_j\backslash [x]$};
  \node [right] at
 (.95,1.15){$x$}; \node  at
 (0,-.5){$L$};\node [above] at
  (1,4.4){\small $C_j$}; \node  at
   (2,2.9){\small$C_i$};

 \draw (5,0)--(4,1.5)--(5,3)--(5,0); \draw (5,3)--(5,4);
 \draw (5,2)--(6,1.5)--(5.7,1)--(5,0); \draw (6,1.5)--(6.5,1)--(5,0);

 \draw[fill=white](5,0)
 circle(.06);\draw[fill=white](4,1.5) circle(.06);\draw[fill=white](5,3)
 circle(.06);\draw[fill=white](5,2)
 circle(.06);\draw[fill=white](5,1)
 circle(.06);\draw[fill=white](6,1.5)
 circle(.06);\draw[fill=white](5.7,1)
 circle(.06);\draw[fill=white](6.5,1)
 circle(.06);\draw[fill=white](5,4)
  circle(.06);\

  \node [left] at (5,0){$0$};
  \node [left] at (4,1.5){$a_6$};
  \node [left] at (5,1){$a_1$};
  \node [left] at (5,2){$x_1$};

  \node [left] at (5,3){$x_2$};
  \node [right] at (6,1.5){$a$};
  \node [left] at (5.7,1){$x$};
  \node [right] at (6.5,1){$a_9$};
 \node [left] at (5,4){$a_5$};

 \node  at
 (5,-.5){$B(L)$};
 \end{tikzpicture}
 \caption{A lower dismantlable lattice and its basic block}\label{f2}
 \end{figure}
\end{ilu}

\begin{obs} If $B$ is a basic block of a lattice $L$ then an element $b$ is structurally deletable if and only if
 $x\rightarrow b\rightarrow y$ is the only directed path from $x$ to $y$ in the cover graph $CG(L)$ of $L$, where $x\prec b \prec y$ in $L$.
\end{obs}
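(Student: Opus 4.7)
The plan is to directly compare the edge sets $E(CG(L))$ and $E(CG(L \setminus \{b\}))$ under the assumption built into the definition, namely that $b \in Irr(L \setminus \{0,1\})$ has a unique lower cover $x$ and a unique upper cover $y$ in $L$. I would first observe that removing $b$ from $L$ deletes exactly the two edges $\{x,b\}$ and $\{b,y\}$ incident to $b$ in $CG(L)$, while every cover relation $p \prec q$ of $L$ with $p,q \neq b$ persists as a cover relation of $L \setminus \{b\}$, since removing an unrelated element cannot insert a new element strictly between $p$ and $q$.

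Next I would classify the cover relations newly introduced in $L \setminus \{b\}$. A new cover $p \prec q$ of $L \setminus \{b\}$ arises exactly when $p < q$ in $L$, some element is strictly between $p$ and $q$ in $L$, and $b$ is the unique such intermediate element. Using that $x$ is the unique lower cover and $y$ the unique upper cover of $b$, a short argument forces $p = x$ and $q = y$: any $p < b$ admits a maximal chain ending at a lower cover of $b$, which must be $x$, so $p \leq x$; if $p < x$ then $x$ is a second intermediate element between $p$ and $q$, a contradiction. The dual argument gives $q = y$. Hence at most one new edge, namely $\{x,y\}$, appears in $CG(L \setminus \{b\})$, and it appears if and only if $b$ is the unique element of $L$ strictly between $x$ and $y$. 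Therefore $|E(CG(L))| - |E(CG(L \setminus \{b\}))| = 2 - \varepsilon$ with $\varepsilon \in \{0,1\}$, equal to $1$ precisely in the unique-intermediate case.

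Finally I would translate this unique-intermediate condition into the directed-path formulation. Any directed path from $x$ to $y$ in $CG(L)$ is a maximal chain from $x$ to $y$; the chain $x \prec b \prec y$ is one such path. If some $z \neq b$ lies strictly between $x$ and $y$, then by the double irreducibility of $b$ the element $z$ must be incomparable to $b$ (if $z < b$ then $z \leq x$, so $x$ is strictly between $z$ and $b$, forcing a second lower cover of $b$, and dually for $z > b$); refining $x < z < y$ to a maximal chain yields a second directed path from $x$ to $y$ avoiding $b$. Conversely, if $b$ is the unique intermediate element, every maximal chain from $x$ to $y$ equals $x \prec b \prec y$. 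Combining this equivalence with the edge count delivers the observation: $b$ is structurally deletable if and only if $x \to b \to y$ is the only directed path from $x$ to $y$ in $CG(L)$.

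The main delicate step is the classification of the new covers introduced in $L \setminus \{b\}$; this is where the full force of the double irreducibility of $b$ is used, and it is the only place where the hypothesis $b \in Irr(L \setminus \{0,1\})$ enters essentially. Everything else amounts to routine combinatorial bookkeeping on maximal chains in the interval $[x,y]$.
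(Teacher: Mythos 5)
Your argument is correct. Note that the paper states this as an unproved \emph{Observation}, so there is no official proof to compare against; your write-up is the natural verification one would expect: since $b\in Irr(L\setminus\{0,1\})$ contributes exactly the two edges $\{x,b\}$ and $\{b,y\}$ to $CG(L)$, the count condition $|E(CG(L))|=|E(CG(L\setminus\{b\}))|+1$ holds precisely when deletion creates exactly one new cover, and your classification (using the uniqueness of the lower and upper covers of $b$) correctly shows the only candidate new cover is $(x,y)$, created iff $b$ is the sole element of the open interval $(x,y)$, which is in turn equivalent to $x\prec b\prec y$ being the unique saturated chain, i.e.\ directed path, from $x$ to $y$. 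The only blemish is a wording slip in the incomparability parenthetical: from $z<b$ and the uniqueness of the lower cover you get $z\le x$, which already contradicts $x<z$ directly; the detour through ``a second lower cover of $b$'' is unnecessary. This does not affect the validity of the proof.
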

 A lattice $L$ with 0 is called {\it section semi-complemented} (in brief, SSC) if, for any $a,b\in L$ and $a\nleq b$
 there exists $c\in L$ such that $0 < c \leq  a$ and $b\wedge  c = 0$; see Janowitz \cite{jan} (see also Joshi \cite{19}).

 \begin{thm}
    \label{ssc}Let $L$ be a lower dismantlable lattice such that the greatest element $1$ of $L$ is join-reducible.
    Then the following statements are equivalent.
    \begin{enumerate}
        \item[$(a)$]The basic block of $L$ is $L$ itself.
        \item[$(b)$]$L$ is $SSC$.
        \item[$(c)$] Every equivalence class of the zero-divisor graph $G_{\{0\}}(L)$ is singleton.
    \end{enumerate}
 \end{thm}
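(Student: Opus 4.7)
My plan is to prove the equivalences cyclically $(b) \Rightarrow (c) \Rightarrow (a) \Rightarrow (b)$. Throughout I use the structural fact that in a lower dismantlable lattice every element distinct from $1$ has a unique upper cover: by Remark \ref{r1} no nonzero element is meet-reducible, and in a finite lattice meet-irreducibility is equivalent to having a unique upper cover. Also, Lemma \ref{400}(c) guarantees $V(G_{\{0\}}(L)) = L \setminus \{0, 1\}$, since $1$ being join-reducible makes $(0, 1)$ an adjunct pair.

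\emph{$(b) \Rightarrow (c)$.} Assume $L$ is SSC and, for contradiction, that distinct vertices $x, y$ of $G_{\{0\}}(L)$ satisfy $N(x) = N(y)$. With $x \not\leq y$ without loss of generality, SSC yields $c$ with $0 < c \leq x$ and $c \wedge y = 0$; by Lemma \ref{400}(a) this means $c \parallel y$, so $c \in N(y) = N(x)$, forcing $c \wedge x = 0$. But $c \leq x$ gives $c \wedge x = c > 0$, a contradiction.

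\emph{$(c) \Rightarrow (a)$, by contrapositive.} Let $c$ be a structurally deletable element, so $c$ is doubly irreducible in $L$ with $c \neq 0, 1$ and unique covers $c_-, c_+$. When $c_- \neq 0$, the unique-upper-cover fact applied to $c_-$ gives $\uparrow c_- = \uparrow c \cup \{c_-\}$, while $\downarrow c = \downarrow c_- \cup \{c\}$ follows from uniqueness of $c_-$; a short case check then shows that for every $z \notin \{c, c_-\}$, $z$ is comparable to $c$ iff $z$ is comparable to $c_-$, hence $N(c) = N(c_-)$ by Lemma \ref{400}(a). When $c_- = 0$, the edge condition of structural deletability requires $0 \prec c_+$ in $L \setminus \{c\}$, which forces $c$ to be the only positive element below $c_+$, so $c_+$ has $c$ as its unique lower cover and the symmetric argument gives $N(c) = N(c_+)$. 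Since $1$ is join-reducible, $c_- = 0$ and $c_+ = 1$ cannot hold together (else $c$ would be the unique lower cover of $1$), so at least one of $c_-, c_+$ is a vertex in $c$'s equivalence class, contradicting $(c)$.

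\emph{$(a) \Rightarrow (b)$, by contrapositive.} Suppose $L$ is not SSC and pick $a \not\leq b$ such that every $c \in (0, a]$ is comparable to $b$. Taking $c = a$ rules out $a \parallel b$, so $a > b > 0$. A case analysis using Lemma \ref{400}(b) forces $a$ and $b$ into a common chain $C_i$ of the adjunct representation (otherwise an atom of $L$ lying in $C_i$ would be below $a$ and incomparable to $b$) and rules out any $t_k$ with $k > i$ lying in $(b, a]$ (otherwise every element of $C_k$ would be $\leq a$ and incomparable to $b$). Under these constraints, every $c \in (b, a] \cap C_i$ with $c \neq 1$ is structurally deletable: double irreducibility uses the unique-upper-cover fact and the fact that $c \notin \{t_k\}_{k > i}$, and the edge condition is verified by another application of Lemma \ref{400}(b) that rules out any element of a chain $C_k$, $k \neq i$, sitting strictly between $c_-$ and $c_+$ in $L \setminus \{c\}$. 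Hence $B(L) \neq L$. The main obstacle will be this last edge-condition verification in full generality, where some $t_k$'s may themselves lie inside non-base chains and an iterated use of Lemma \ref{400}(b) is required; similar care is needed in the $c_- = 0$ subcase of $(c) \Rightarrow (a)$ to extract uniqueness of $c$ as a lower cover of $c_+$ from the edge condition.
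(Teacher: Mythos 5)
Your route is genuinely different from the paper's: the paper proves only $(a)\Leftrightarrow(b)$ and imports $(b)\Leftrightarrow(c)$ from an external lemma of Joshi et al., whereas you close the cycle $(b)\Rightarrow(c)\Rightarrow(a)\Rightarrow(b)$ self-contained, which is a real gain. Your $(b)\Rightarrow(c)$ is correct and clean, and the skeleton of $(c)\Rightarrow(a)$ is right (one caveat: $0$ can have many upper covers, so state the unique-upper-cover fact for \emph{nonzero} elements $\neq 1$ only; you in fact only apply it there). Note also that both you and the paper extract a doubly irreducible element from the failure of SSC, but the paper works with the element $b$ of the non-SSC witness itself, while you work inside the interval $(b,a]$; either works.

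The acknowledged ``main obstacle'' is, however, a real gap as written, and the tool you propose for it (iterated use of Lemma \ref{400}$(b)$ with chain bookkeeping) is the wrong one --- it would be painful and is unnecessary. The edge condition closes in one line: if $c$ is doubly irreducible with covers $c_-\prec c\prec c_+$ and $z\neq c$ satisfies $c_-<z<c_+$, then $z\not<c$ (else $z\leq c_-$) and $z\not>c$ (else $z\geq c_+$), so $z\parallel c$ and Lemma \ref{400}$(a)$ gives $z\wedge c=0$; but $z\wedge c\geq c_-\geq b>0$ since $b<c$ forces $b\leq c_-$. The same incomparability observation replaces the chain analysis needed to place $a,b$ on a common chain and to exclude adjunct elements from $(b,a]$: join-irreducibility of any $c\in(b,a]$ follows directly because two distinct lower covers $u,v$ of $c$ would be incomparable with $u\wedge v=0$, yet both are nonzero, $\leq a$, hence comparable to $b$, which forces $u=b$ (or $v=b$) and contradicts $u\parallel v$. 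Two further points you must add. First, you need $(b,a]$ to contain an element $\neq 1$; this holds because a non-SSC witness with $a=1$ is impossible when $1$ is join-reducible (two incomparable lower covers $u,v$ of $1$ would both be comparable to $b$, forcing $b=u$ and then $v$ comparable to $u$). Second, in the $c_-=0$ subcase of $(c)\Rightarrow(a)$, the ``similar care'' you defer is again immediate: the edge condition says $c$ is the unique element strictly between $0$ and $c_+$, so $\downarrow c_+=\{0,c,c_+\}$ and the symmetric neighborhood computation goes through; no further analysis is required.
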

 \begin{proof} $(a)\Rightarrow (b)$ Suppose that $L$ is a basic block of $L$ itself. On the contrary, suppose $L$ is not $SSC$.
    Hence there exists a pair of elements $b\nleq a$ with $a\wedge x\neq 0$ for every nonzero $x\leq b$. Then every nonzero
    $x\leq b$ is comparable with $a$.  We claim that $b$ is a join-irreducible element. Suppose $b$ is not join-irreducible, then $b=x\vee y$ for $x,y\prec b$. Clearly $a\nleq x,y$. Otherwise, since $L$ is lower dismantlable, $x|| y$. We have by Lemma \ref{400}, $x\wedge y=0$. Hence $a=0$, a contradiction to $a\wedge x\neq 0$ for any $x\leq b$.
    But then the only possibility is $x,y\leq a$ which yields $b=x\vee y\leq a$, again a contradiction. Thus $b$ is join-irreducible.  Since $L$ is a
    lower dismantlable lattice, we get $L= C_0]_{0}^{a_1} C_1]_{0}^{a_2}\cdots]_{0}^{a_{n}} C_n$, where each $C_i$ is a chain.
    By Remark \ref{r1}, every nonzero element of $L$ is meet-irreducible. In particular, $b$
    is meet irreducible, hence $b$ is doubly irreducible. Let $w_1$ and $w_2$ be the unique upper and lower
    covers of $b$ respectively. Then $w_1\rightarrow b\rightarrow w_2$ is the unique directed path from $w_1$ to $w_2$ in the cover graph of $L$, {\it i.e.}, $CG(L)$.
    Hence the element $b$ is structurally deletable and by deleting $b$, there is an edge joining $w_1$ to $w_2$, a contradiction
     to the fact
    that $L$ is a basic block of $L$ itself. Hence $L$ is $SSC$.\\
    $(b)\Rightarrow (a)$ Suppose $L$ is $SSC$. If $L$ is not a basic block of $L$ itself, then there exists a doubly irreducible element, say $b$,
    which is structurally deletable. We claim that $b$ is an atom. Suppose on the contrary that there exists nonzero element $x\prec b$. Then by the definition of $SSC$, there exists nonzero $c\leq b$
    such that $x\wedge c=0$. Since $x\prec b$ and $c\leq b$, we get
     $x\vee c=b$, a contradiction to the choice of $b$. Hence $b$ is an atom in $L$. Let $w$ be the unique upper cover of
    $b$. Then $0 \rightarrow b \rightarrow w$ is a directed path from $0$ to $w$ in $CG(L)$. If $w$ is join-reducible, then there is
    another path  $0\rightarrow c \rightarrow w$ in $CG(L) $, hence $b$ can not be structurally deletable, a contradiction.
    If $w$ is join-irreducible,
    then $0 \rightarrow b \rightarrow w$ is the only directed path from $0$ to $w$ in $CG(L)$. Thus  $b\leq w$  and there is no
    element $y\leq w$ such that $b\wedge y=0$, a contradiction, as $L$ is SSC. Hence $L$ is a basic block of $L$ itself.\\
    $(c)\Leftrightarrow (b)$ follows from Joshi et al. \cite[Lemma 9]{jwp1}.\end{proof}

Joshi et al. \cite{jwp1} proved that Isomorphism Problem is true for the class of $SSC$ meet
semi-lattices.
\begin{thm}[Joshi et al. \cite{jwp1}]\label{t2} Let $\mathscr{L}$ denotes the class of $SSC$ meet semi-lattices. Then  $L_1\cong L_2$ if and
only if $G_{\{0\}}(L_1)\cong G_{\{0\}}(L_2)$ for $L_1,L_2\in \mathscr{L}$.
\end{thm}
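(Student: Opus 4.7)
The ``only if'' direction is immediate: any meet-semilattice isomorphism preserves $0$ and the meet operation, hence the adjacency relation $x \wedge y = 0$, so it restricts to a bijection of vertex sets that respects edges. The nontrivial direction is the converse, and the plan is to upgrade a graph isomorphism $\phi \colon G_{\{0\}}(L_1) \to G_{\{0\}}(L_2)$ to a semi-lattice isomorphism by first recovering the partial order, and then the meet operation, from the neighborhood data of the zero-divisor graph itself.

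The workhorse to establish first is the following neighborhood-inclusion criterion: in any $SSC$ meet semi-lattice $L$, for two zero-divisors $a, b \in V(G_{\{0\}}(L))$,
\[
a \leq b \iff N(b) \subseteq N(a),
\]
where $N(\cdot)$ denotes the open neighborhood in $G_{\{0\}}(L)$. The forward direction uses only that meets are order-preserving---$x \wedge b = 0$ forces $x \wedge a \leq x \wedge b = 0$---together with the easy check that $x = a$ cannot occur, since $a \leq b$ combined with $a \wedge b = 0$ would yield $a = 0$, contradicting $a \in V(G_{\{0\}}(L))$. The reverse direction is where the $SSC$ hypothesis enters decisively: if $a \nleq b$, $SSC$ supplies $c$ with $0 < c \leq a$ and $c \wedge b = 0$; this $c$ differs from $b$ (else $c = c \wedge b = 0$) and lies in $N(b) \subseteq N(a)$, forcing $c \wedge a = 0$, but $c \leq a$ gives $c \wedge a = c$, so $c = 0$, a contradiction.

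Once this lemma is in hand, $\phi$ automatically preserves the neighborhood-inclusion relation and hence induces an order isomorphism between the zero-divisor posets of $L_1$ and $L_2$. The set of zero-divisors is closed under meet (if $a \wedge b \neq 0$, any nonzero annihilator of $a$ simultaneously annihilates $a \wedge b$), and meets are uniquely recovered from the order as greatest lower bounds, so $\phi$ preserves meets whose output is nonzero; the case $a \wedge b = 0$ is encoded exactly as adjacency and preserved by hypothesis. Extending by $\tilde\phi(0) = 0$---and, in the presence of non-zero-divisor elements, pinning them down by the meet-compatibility they impose against the already-identified zero-divisor structure---delivers the semi-lattice isomorphism $\tilde\phi \colon L_1 \to L_2$. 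The main obstacle is twofold: first, the reverse implication of the neighborhood-inclusion lemma, where the $SSC$ hypothesis has to be invoked in exactly the right way and the corner cases ($c = b$, $x = a$) eliminated by checking vertex membership at each step; and second, the extension past non-zero-divisor elements, where the $SSC$ hypothesis must be leveraged a second time to show that every such element is uniquely determined by the zero-divisor structure (for instance, a top element, if present, is characterized as the unique common upper bound of every zero-divisor).
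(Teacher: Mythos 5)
The paper does not actually prove this statement: Theorem \ref{t2} is quoted from Joshi, Waphare and Pourali \cite{jwp1} and used as a black box, so there is no in-paper proof to measure yours against. Taken on its own terms, your reconstruction has the right engine. The lemma that for vertices $a,b$ of $G_{\{0\}}(L)$ one has $a\leq b$ if and only if $N(b)\subseteq N(a)$ is correct as you argue it: the forward direction needs only monotonicity of meet plus the observation that $x=a$ cannot occur, and the reverse direction uses SSC exactly once and correctly. The recovery of nonzero meets also works, because for vertices $a,b$ with $a\wedge b\neq 0$ the element $a\wedge b$ is again a vertex (any nonzero annihilator of $a$ annihilates $a\wedge b$) and is the greatest lower bound of $\{a,b\}$ inside the vertex subposet, so the induced order isomorphism automatically respects such meets, while the case $a\wedge b=0$ is adjacency and is preserved by hypothesis.

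The one genuinely loose step is your treatment of non-zero-divisors, and it is simultaneously easier and more delicate than your sketch suggests. Easier: SSC kills the problem in one line. If $d\neq 0$ is not a zero-divisor and $a\nleq d$ for some $a$, then SSC yields $0<c\leq a$ with $c\wedge d=0$, making $d$ a zero-divisor; hence every element lies below $d$, so $d$ is the greatest element. Thus $L=\{0\}\cup V(G_{\{0\}}(L))\cup\{1\}$ and there is at most \emph{one} element to pin down, not a family constrained by ``meet-compatibility.'' More delicate: the zero-divisor graph carries no information about whether that top element is present, so the literal statement fails without a boundedness convention. The SSC meet semi-lattices $\{0,a,b\}$ and $\{0,a,b,1\}$, each with $a\wedge b=0$, both have zero-divisor graph $K_2$ but are not isomorphic. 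Your plan of characterizing the top ``as the unique common upper bound of every zero-divisor'' cannot detect this, because the graph does not see whether such an upper bound exists. The fix is the standing convention (in force in \cite{jwp1} and throughout this paper, where all structures are bounded lattices) that the semi-lattices considered have both $0$ and $1$, equivalently that $V(G_{\{0\}}(L))=L\setminus\{0,1\}$; under that convention, extending $\phi$ by $0\mapsto 0$ and $1\mapsto 1$ closes your proof completely.
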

\begin{rem}In view of Theorem \ref{ssc} and Theorem \ref{t2}, it is clear that Isomorphism Problem is true for the class of lower dismantlable lattices which
are basic block of itself.

In the next section, we prove that Isomorphism Problem is true for  the larger class of lattices, namely, the class of lower dismantlable lattices.
         \end{rem}

\section{main results}
Let $T$ be a rooted tree with the root $R$ having at least
two branches. Let $G(T)$ be the {\it non-ancestor graph} of $T$ with vertex set $V(G(T))=T\backslash \{R\}$ and two vertices
are adjacent in $G(T)$ if and only if no one is an ancestor of the other. Denote the class of non-ancestor graphs of rooted trees by
$\mathcal{G_T}$.

In \cite{ap},  the following result is proved for the zero-divisor graphs of lower dismantlable lattices.

\begin{thm}[Patil et al. \cite{ap}]\label{rp} For a simple undirected graph $G$, the following statements are equivalent.
    \begin{enumerate}
        \item[$(a)$] $G\in \mathcal{G_T}$, the class of non-ancestor
        graphs of rooted trees. \item[$(b)$] $G=G_{\{0\}}(L)$ for
        some lower dismantlable lattice $L$ with the greatest element $1$
        as a join-reducible element. \item[$(c)$] $G$ is the
        incomparability graph of $(L\backslash \{0,1\},\leq) $ for some
        lower dismantlable lattice $L$ with the greatest element $1$ as a
        join-reducible element.
    \end{enumerate}
\end{thm}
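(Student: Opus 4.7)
The plan is to prove $(b) \Leftrightarrow (c)$ directly from the incomparability-equals-meet-zero dichotomy of Lemma \ref{400}(a), and then to exhibit an explicit correspondence between lower dismantlable lattices $L$ whose top element is join-reducible and rooted trees $T$ whose root has at least two branches, giving $(a) \Leftrightarrow (c)$.

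For $(b) \Leftrightarrow (c)$: once I show the two graphs have the same vertex set, Lemma \ref{400}(a) identifies adjacency in $G_{\{0\}}(L)$ with incomparability in $L \setminus \{0, 1\}$. The element $1$ is never a vertex of $G_{\{0\}}(L)$ since $1 \wedge y = y$ forces $y = 0$. To show that every $a \in L \setminus \{0, 1\}$ is incomparable to some nonzero element, I would write $1 = x \vee y$ with $x, y < 1$ (which exists because $1$ is join-reducible); then $x \| y$, so $x \wedge y = 0$ by Lemma \ref{400}(a). A short case analysis on whether $a$ is $\leq$ or $\geq$ each of $x, y$ rules out every possibility except $a \| x$ or $a \| y$, since $a \leq x$ and $a \leq y$ would give $a \leq x \wedge y = 0$, $a \geq x$ and $a \geq y$ would give $a \geq 1$, and the mixed cases would force $x \leq y$ or $y \leq x$.

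For $(c) \Rightarrow (a)$: given $L$, I would take $T := L \setminus \{0\}$ rooted at $R := 1$, with the parent of each $v \neq 1$ being its unique upper cover in $L$. The key structural input is Remark \ref{r1}: every nonzero element of $L$ is meet-irreducible, and in a finite lattice meet-irreducibility of a non-$1$ element is equivalent to having a unique upper cover. Thus $T$ is well-defined and is a rooted tree; ancestry in $T$ corresponds to $\geq$ in $L$, so the non-ancestor graph $G(T)$ coincides with the incomparability graph of $L \setminus \{0, 1\}$, and $R$ has at least two children since $1$ is join-reducible.

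Conversely, for $(a) \Rightarrow (c)$ (and hence $(b)$): given $T$ with root $R$ having at least two children, I would define $L := T \cup \{0\}$ ordered by $v \leq_L u$ iff $u = v$ or $u$ is an ancestor of $v$ in $T$, with $0$ at the bottom. Incomparable pairs in $T$ share no common descendant so their meet in $L$ is $0$; their join is the least common ancestor in $T$; so $L$ is a lattice with bottom $0$ and top $1 = R$, and $1$ is join-reducible. The main obstacle is showing that $L$ is lower dismantlable with every adjunct pair of the form $(0, x_i)$. I would do this inductively by peeling off leaves: choose any root-to-leaf path of $T$ together with $0$ as the initial chain $C_0$ (this has at least three elements, so $0 \not\prec 1$ in $C_0$), then process the remaining leaves one at a time. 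For each new leaf $v_i$, the path from $v_i$ up to its first already-processed ancestor $x_i$ is adjoined as the chain $C_i$ with adjunct pair $(0, x_i)$. The required condition $0 \not\prec x_i$ in the current sublattice holds because $x_i$ always has at least one already-processed descendant in $T$ (either its child on $C_0$ or a child on a previously processed branch), so $x_i$ is never an atom.
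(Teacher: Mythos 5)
The paper does not actually prove Theorem \ref{rp}; it is imported verbatim from the companion paper \cite{ap}, so there is no in-paper argument to compare yours against. Judged on its own, your proof is correct and is built on exactly the correspondence the paper records in Remark \ref{r2}: $T=L\setminus\{0\}$ rooted at $1$ (well-defined because Remark \ref{r1} gives every element of $L\setminus\{0,1\}$ a unique upper cover), and conversely $L=T\cup\{0\}$ with $0$ appended below the pendant vertices. Your $(b)\Leftrightarrow(c)$ argument via Lemma \ref{400}$(a)$, including the case analysis showing $V(G_{\{0\}}(L))=L\setminus\{0,1\}$ when $1=x\vee y$ is join-reducible, is complete. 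The only place you should add a line is in $(a)\Rightarrow(c)$: besides checking $0\not\prec x_i$ (which you do, correctly, via the already-processed descendant of $x_i$), you must also verify that the order produced by the adjunct operation agrees with the ancestor order of $T$ at every stage, i.e.\ that each vertex on the new leaf-path is, among previously processed elements, above only $0$ and below exactly those elements $\geq x_i$. This follows from the invariant implicit in your peeling procedure --- the processed set is always closed under taking ancestors, so the first processed ancestor $x_i$ of the new leaf dominates every processed ancestor of every vertex on the new path, and no processed vertex is a descendant of an unprocessed one --- but it is worth stating, since it is the step that makes ``adjoin with pair $(0,x_i)$'' actually reproduce $L$ rather than merely some lattice on the same underlying set.
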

\begin{rem}\label{r2} From Theorem \ref{rp}, it is clear that there is a one-to-one correspondence between the lower dismantlable lattices and rooted trees.
In fact, a lattice $L$ is a lower dismantlable lattice if and only if $L\backslash \{0\}$ is a rooted tree with the root 1. On the other hand, given a rooted tree $T$ with the root $R$, we join an element say 0 to all the pendent vertices of $T$ and get a cover graph of a lower dismantlable lattice $L$ in which $R$ is the greatest element and $0$ is the smallest element of $L$. We call $T$ as the {\it corresponding rooted tree} of $L$. Hence in view of Theorem \ref{rp}, it is clear that $G_{\{0\}}(L)=G(T)$ for a lower dismantlable lattice $L$ and its corresponding rooted tree $T$. Therefore the equivalence classes of $G(T)$ are same as the equivalence classes of $G_{\{0\}}(L)$.
\end{rem}
\noindent
{\bf Note:} In a lower dismantlable lattice which is not a chain, every adjunct element contains at least two atoms.

In the following construction, we give an algorithm to determine all equivalence classes of $G_{\{0\}}(L)$, where $L$ is a lower dismantlable lattice.
\begin{con}
Let $T$ be a rooted tree and $G(T)$ be the non-ancestor graph of $T$. A vertex $v$ of $T$ is a {\it node} if the total degree $deg(v)>2$ in $T$.
If $L$ is a lower dismantlable lattice with the corresponding rooted tree $T$, then $a(\neq 1)$ is an adjunct element in $L$ if and only
if $a$ is a node in $T$.

Let $v$ be a  node of $T$ such that no successor of $v$ is a node of $G(T)$. Then each branch with a successor
of $v$, {\it i.e.}, a directed path in $T$ of which every element is a successor of $v$,  is an equivalence class in $G(T)$ under the relation $\sim$ ({\it i.e.} having same neighbors). Delete all such branches and look at the resultant rooted tree $T'$. Repeat this process in $T'$ and so on, we get all the equivalence classes of $G(T)$. \end{con}
 We illustrate this procedure with an example.
\begin{exa}\label{ex2}In Figure \ref{f3}, a lower dismantlable lattice $L$, its corresponding rooted tree $T$ and its zero-divisor graph $G_{\{0\}}(L)$ are depicted. In the corresponding rooted tree $T$, $a_5$, $a_6$ are nodes having no successor as a node, whereas $a_8$ is a node having $a_5$ and $a_6$ as successor nodes. Hence delete vertices $a_1,a_2,a_3,a_4,a_7$. This gives the equivalence classes $\{a_1,a_7\},\{a_2\},\{a_3\}$, and $\{a_4\}$. Note that these equivalence classes do not contain an adjunct element. Now, the resultant rooted tree with the root 1 and $a_8$ is a node without successor node. Therefore $\{a_5\}$ and $\{a_6\}$ are the equivalence classes of $G(T)$ which contain an adjunct element of $L$. In the last stage we get $\{a_8\}$ as an equivalence class. Thus in this way, we get all the equivalence classes of $G(T)$.
 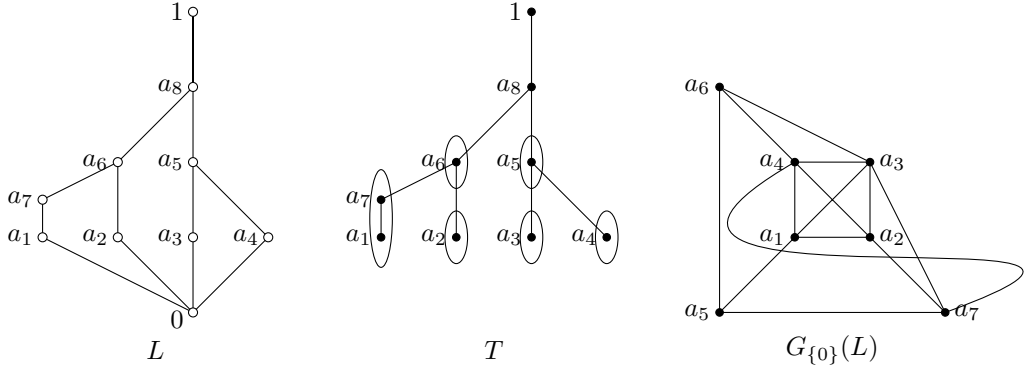
\begin{figure}[h]
    \begin{tikzpicture}
\draw (-2,1.5)--(-1,2)--(0,3)--(0,4)--(0,0);
\draw (-2,1.5)--(-2,1)--(0,0)--(-1,1)--(-1,2);
\draw (0,2)--(1,1)--(0,0);
\draw[fill=white](0,0) circle(.06);    \draw[fill=white](0,1) circle(.06);   \draw[fill=white](0,2) circle(.06);
\draw[fill=white](0,3) circle(.06);
\draw[fill=white](0,4) circle(.06);      \draw[fill=white](-2,1.5) circle(.06);
\draw[fill=white](-1,2) circle(.06);    \draw[fill=white](-2,1) circle(.06);   \draw[fill=white](-1,1) circle(.06);
\draw[fill=white](1,1) circle(.06);
 \node [left] at (0,-.1){$0$}; \node [left] at (0,3){$a_8$};  \node [left] at (0,1){$a_3$};   \node [left] at (0,2){$a_5$};   \node [left] at (0,4){$1$};
  \node [left] at (-2,1.5){$a_7$};   \node [left] at (-1,2){$a_6$};   \node [left] at (-2,1){$a_1$};
   \node [left] at (-1,1){$a_2$};    \node [left] at (1,1){$a_4$};
\node [] at (-.5,-.5){$L$};

 \draw (2.5,1.25) ellipse (.15
 and .65);
\draw (3.5,1) ellipse (.15
and .35); \draw (3.5,2) ellipse (.15
and .35); \draw (4.5,1) ellipse (.15
and .35);
\draw (5.5,1) ellipse (.15
and .35); \draw (4.5,2) ellipse (.15
and .35);

\draw (2.5,1.5)--(3.5,2)--(4.5,3)--(4.5,4)--(4.5,1);
\draw (2.5,1.5)--(2.5,1); \draw (3.5,1)--(3.5,2);
\draw (4.5,2)--(5.5,1);
 \draw[fill=black](4.5,1) circle(.05);   \draw[fill=black](4.5,2) circle(.05);  \draw[fill=black](4.5,4) circle(.05);
\draw[fill=black](4.5,3) circle(.05);      \draw[fill=black](2.5,1.5) circle(.05);
\draw[fill=black](3.5,2) circle(.05);    \draw[fill=black](2.5,1) circle(.05);   \draw[fill=black](3.5,1) circle(.05);
\draw[fill=black](5.5,1) circle(.05);
  \node [left] at (4.5,1){$a_3$};   \node [left] at (4.5,2){$a_5$};  \node [left] at(4.5,3) {$a_8$}; \node [left] at (4.5,4){$1$};
  \node [left] at (2.5,1.5){$a_7$};   \node [left] at (3.5,2){$a_6$};   \node [left] at (2.5,1){$a_1$};
\node [left] at (3.5,1){$a_2$};    \node [left] at (5.5,1){$a_4$};
\node [] at (4,-.5){$T$};

\draw (8,1)--(8,2)--(9,2)--(9,1)--(8,1)--(9,2)--(10,0)--(9,1)--(8,2)--(7,3)--(7,0)--(10,0); \draw (7,3)--(9,2);
\draw (7,0)--(8,1);

\draw[fill=black](7,0) circle(.05);
\draw[fill=black](7,3) circle(.05);
\draw[fill=black](8,1) circle(.05);
\draw[fill=black](8,2) circle(.05);\draw[fill=black](9,1) circle(.05);
\draw[fill=black](9,2) circle(.05);
\draw[fill=black](10,0) circle(.05);
%\path (10,0) edge [out=55,in=-30] (8,2);

\draw (10,0) to[out=20, in=210, looseness=4] (8,2);

\node [left] at (8,1){$a_1$};  \node [right] at (9,2){$a_3$};  \node [left] at (7,0){$a_5$};  \node [left] at (7,3){$a_6$};
\node [right] at (9,1){$a_2$};  \node [left] at (8,2){$a_4$};  \node [right] at (10,0){$a_7$};
\node [] at (8.5,-.5){$G_{\{0\}}(L)$};
\end{tikzpicture}
\caption{A lower dismantlable lattice with its corresponding rooted tree and zero-divisor graph}\label{f3}
\end{figure}
\end{exa}

The following lemma gives more about equivalence classes of $G_{\{0\}}(L)$.
\begin{lem}\label{l14}Let $L$ be a lower dismantlable lattice and $G_{\{0\}}(L)$ be its zero-divisor graph. Then the  following statements are true.
\begin{enumerate}\item[$(a)$] Every equivalence class of $G_{\{0\}}(L)$ forms a chain in $L$. Moreover, every equivalence class contains
at most one adjunct element.
        \item[$(b)$] If $a$ is an adjunct element in the equivalence class
        $[x]$ of $G_{\{0\}}(L)$, then $a\leq x$. \item[$(c)$] An equivalence class $[x]$ of $G_{\{0\}}(L)$ contains an adjunct element of $L$ if and only if $[x]$ of $G(T)(=G_{\{0\}}(L))$
does not contain a pendent vertex of the corresponding rooted tree $T$(mentioned in Remark \ref{r2}).
\item[$(d)$] The branches that we get by the procedure explained in Example \ref{ex2} are precisely the equivalence classes of $G_{\{0\}}(L)$.\end{enumerate} \end{lem}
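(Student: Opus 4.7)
My plan for (a)--(c) rests on two facts: Lemma~\ref{400}(a) translates ``adjacent in $G_{\{0\}}(L)$'' into ``incomparable in $L$'', and in a lower dismantlable lattice an element is an adjunct element precisely when it has at least two lower covers (two such covers are incomparable, hence meet to $0$ by Lemma~\ref{400}(a); conversely each adjunct operation gives the adjunct element a fresh lower cover).

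For (a), if $x\sim y$ then $x$ and $y$ are non-adjacent (otherwise $x\in N(x)$), so they are comparable by Lemma~\ref{400}(a), and the class is a chain. If two distinct adjunct elements $a<a'$ both lay in $[x]$, then among the at least two pairwise incomparable lower covers of $a'$, at most one can exceed $a$, since two such would share $a>0$ as a common lower bound while any two incomparable elements meet to $0$. A remaining lower cover $c$ of $a'$ satisfies $c\not\ge a$; since $c<a<a'$ would contradict $c\prec a'$, we get $c\|a$. Then $c\in N(a)$ but $c<a'$ gives $c\notin N(a')$, a contradiction. Part (b) applies the identical pigeonhole to the lower covers of $a$ against an $x\in [a]$ with $x<a$: a lower cover $c$ of $a$ with $c\not\ge x$ satisfies $c\|x$, yielding $c\in N(x)\setminus N(a)$, a contradiction; hence $a\le x$.

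For (c), forward: if $[x]$ contains both an adjunct element $a$ and a pendent vertex $\ell$ (an atom of $L$), then (b) forces $a\le\ell$; since $\ell$ is an atom and $a\ne 0$, $a=\ell$, contradicting the Note that every adjunct element contains at least two atoms. Reverse: let $m=\min[x]$; $m$ is not an atom by hypothesis, and I claim it is adjunct. Otherwise $m$ has a unique lower cover $c$; $c=0$ would make $m$ an atom, so $c\ne 0$. By Remark~\ref{r1} every nonzero element has a unique upper cover, so $\uparrow c=\{c\}\cup\uparrow m$; combined with $\downarrow m=\downarrow c\cup\{m\}$ (uniqueness of $c$ as lower cover of $m$), the set of elements comparable to $m$ coincides with that comparable to $c$, giving $N(m)=N(c)$ in $G_{\{0\}}(L)$; hence $c\in[x]$, contradicting the minimality of $m$.

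For (d), parts (a)--(c) yield the structural description I will exploit: every equivalence class is a chain in $T$ whose minimum is either a leaf of $T$ (type~1) or an adjunct element, i.e.\ a node of $T$ (type~2), and the class collects exactly the non-node ancestors of this minimum, going up until it meets the first node-ancestor (not included) or the root. I then show by induction on the iteration that the branches produced by the procedure are precisely these chains. When $v$ is a node of the current tree $T^{(k)}$ with no node-successors in $T^{(k)}$, each branch from a child of $v$ down to a leaf of $T^{(k)}$ is a chain of non-nodes of $T^{(k)}$ sitting on a single chain of $T$; any two of its elements have the same incomparables in $T$ (their principal up/down-sets in $T$ differ only within the chain), while an element in a sibling branch of $v$ is an incomparable neighbor of the branch elements but not of itself, and an element outside the subtree of $v$ is distinguished by $v$ itself or by another branch of $v$. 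Therefore each branch is an equivalence class. The main obstacle, where I expect to spend most effort, is the converse: every equivalence class arises as some branch in some iteration. I will argue by bottom-up induction on the node tree of $T$: a type-1 class with leaf $\ell$ and first node-ancestor $v$ appears precisely when all node-descendants of $v$ have been leafified in earlier iterations, so $v$ becomes eligible; a type-2 class with adjunct minimum $a$ appears only after $a$'s own branches have been removed (leafifying $a$) and $a$'s first node-ancestor in $T$ subsequently becomes eligible. The procedure terminates when the residual tree is a non-branching chain below the root; its vertices are pairwise comparable in $T$ (their incomparables have all been deleted), so they form the single final equivalence class.
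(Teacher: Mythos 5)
Your argument is correct, but for the substantive parts it follows a genuinely different route from the paper. Parts (a), (b) and the forward half of (c) coincide with the paper's proof in spirit: the paper also derives comparability of class members from Lemma \ref{400}(a), and rules out a second adjunct element (resp.\ $x<a$, resp.\ an atom in $[x]$) by producing a nonzero $w\leq z$ with $y\wedge w=0$; you phrase the same pigeonhole in terms of the $\geq 2$ pairwise incomparable lower covers of an adjunct element, which is an equivalent device. The real divergence is in the reverse half of (c): the paper starts from an atom $p\leq x$, uses $N(x)\subsetneqq N(p)$ to find $y<x$ with $p\wedge y=0$, declares $p\vee y$ an adjunct element, and iterates joins ``continuing in this way'' until the neighborhood stabilizes; you instead take $m=\min[x]$ and show that if $m$ were join-irreducible its unique nonzero lower cover $c$ would satisfy $N(c)=N(m)$ (via Remark \ref{r1} giving $c$ a unique upper cover), contradicting minimality. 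Your version is shorter, avoids the paper's unbounded iteration, and localizes the argument to one covering pair; both versions rest on the same unproved identification of join-reducible elements with adjunct elements (you as ``$\geq 2$ lower covers $\Rightarrow$ adjunct'', the paper as ``$p\vee y$ is an adjunct element''), which the paper itself only asserts in its Construction via the node/adjunct correspondence, so you are on equal footing there. For (d) the paper offers only the one-line remark that the classes of $G(T)$ and $G_{\{0\}}(L)$ agree, whereas you supply an actual structural description of the classes (chains of non-nodes above a leaf or a node, up to the first node-ancestor) and a plan to match them with the branches of the procedure; one point to tighten when you write this out is that a formerly processed node can reappear only as the bottom element of a later branch (all its children having been deleted), which is exactly what makes the consecutive up/down-set comparison go through, and that the terminal residue need not be a single chain if the root retains two children, an imprecision already present in the paper's own Construction and Example \ref{ex2} (where the final ``class'' $\{a_8\}$ is not even a vertex of $G_{\{0\}}(L)$ since $1$ is join-irreducible there).
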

\begin{proof}$(a)$ Let $[x]=\{y\in V(G_{\{0\}}(L)) ~|~ N(x)=N(y)\}$. Let $y_1,y_2\in [x]$. If $y_1|| y_2$, then
 $y_1\wedge y_2=0$, by Lemma \ref{400}. Hence $y_1\in N(y_2)=N(y_1)$, a contradiction. Hence $y_1$ and $y_2$ are comparable.
 Therefore $[x]$ is a chain. Next, suppose $y,z\in [x]$ be two adjunct elements. Without loss of generality,
 let $y< z$. Since $z$ is an   adjunct element, there exists a nonzero $w\leq z$ such that $y\wedge w=0$.
 Hence $w\in N(y)=N(z)$ which yields $w=w\wedge z=0$, a contradiction. Therefore  every equivalence
 class contains at the most one  adjunct element.\\
    $(b)$ As $a\sim x$, we get $a$ and $x$ are
    comparable. If $x<a$, then there exists a nonzero $b<a$ such that
    $x\wedge b=0$ (as $a$ is an adjunct element). Hence $b\in
    N(x)=N(a)$ which gives $b=a\wedge b=0$, a contradiction. Therefore
    $a\leq x$.\\
    $(c)$ It is easy to observe that  an element $y$ is an atom in $L$ if and only if $y$ is a pendent vertex
    of the corresponding rooted tree $T$ of $L$. Suppose that $a$ be an adjunct element in $[x]$. Hence $a\leq x$. On the
    contrary, assume that $p\in [x]$ be an atom. Then $p\leq a$. Since $a$ is an adjunct element, there exists
    an atom $q(\neq p)\leq a$. Hence $p\wedge q=0$, which gives $q\in N(p)=N(a)$. This yields $q=q\wedge a=0$,
    a contradiction. Therefore $[x]$ contains no atom. Conversely, suppose that $[x]$ of $G(T)$ contains no atom of $L$. %On the contrary, assume that $[x]$ contains no adjunct element.
     Let $p$ be an atom such that $p\leq x$. This gives $N(x)\subsetneqq N(p)$. Since $[x]$ contains no atom,
     there exists $y\in N(p)$ such that $y\wedge x\neq 0$. Then $x$ and $y$ are comparable, by Lemma \ref{400}. Since $p\leq x$,
     the case $x\leq y$ is impossible. Hence $y<x$. Then $p\vee y\leq x$. If $p\vee y\in [x]$ then we are
     through, as $p\vee y$ is an adjunct element. If not, {\it i.e.}, $p\vee y\notin [x]$ then again $N(x)\subsetneqq N(p\vee y)$.
     So there exists $c_1\in N(p\vee
     y)$ such that $c_1\notin N(x)$. This gives $x$ and $c_1$ are
     comparable, by Lemma \ref{400} $(a)$. Using the above arguments, we get $c_1\leq x$ and
     $N(x)\subseteq N(p\vee y\vee c_1)$. Continuing in this way we get
     an element say $c_n$ (as $L$ is finite) such that $N(x)=N(p\vee
     y\bigvee_{i=1}^n c_i)$. Then $p\vee y\bigvee_{i=1}^n c_i$ is
     an adjunct element such that $y\vee z\bigvee_{i=1}^n c_i
     \in [x]$.\\
     $(d)$ Follows from the fact that the equivalence classes of $G(T)$ are same as the equivalence classes of $G_{\{0\}}(L)$.
\end{proof}
\begin{cor}\label{701} An equivalence class
     $[x]$ of $G_{\{0\}}(L)$ for a lower dismantlable lattice $L$ contains an adjunct element if and only if there
     is a pair of vertices $y,z\in V(G_{\{0\}}(L))$ such that $y$ is
     adjacent to $z$ and $x$ is not adjacent to any of $y$ and $z$.
\end{cor}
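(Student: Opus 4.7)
The plan is to use Lemma \ref{l14}(c), which rephrases ``$[x]$ contains an adjunct element'' as ``$[x]$ contains no atom of $L$'' (since atoms of $L$ correspond to pendant vertices of the corresponding rooted tree $T$). With this reformulation, both directions become short.

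For the forward direction, suppose $[x]$ contains an adjunct element $a$. By the Note preceding the construction, $a$ has at least two distinct atoms below it, say $p$ and $q$. Since $p\ne q$ are atoms, Lemma \ref{400}(a) gives $p\wedge q=0$, so $p$ and $q$ are adjacent (and both lie in $V(G_{\{0\}}(L))$). By Lemma \ref{l14}(b), $a\le x$, hence $p,q\le a\le x$, so $p\wedge x=p\ne 0$ and $q\wedge x=q\ne 0$. Thus $x$ is not adjacent to either of $p$ or $q$, and setting $y=p$, $z=q$ finishes this direction.

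For the converse, suppose there exist $y,z\in V(G_{\{0\}}(L))$ with $y$ adjacent to $z$ and $x$ adjacent to neither. By Lemma \ref{l14}(c), it suffices to show that $[x]$ contains no atom of $L$. Assume for contradiction that an atom $p$ lies in $[x]$, so $N(p)=N(x)$. If $p=y$, then $z\in N(p)=N(x)$, contradicting $x\not\sim z$; the case $p=z$ is symmetric. Hence $p\notin\{y,z\}$, and from $y,z\notin N(x)=N(p)$ we obtain $p\wedge y\ne 0$ and $p\wedge z\ne 0$. By Lemma \ref{400}(a), $p$ is comparable to both $y$ and $z$, and since $p$ is an atom this forces $p\le y$ and $p\le z$. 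Therefore $p\le y\wedge z=0$, a contradiction.

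The only real choice point is the forward direction, where I must produce two adjacent non-neighbors of $x$; the ``Note'' guaranteeing at least two atoms below every adjunct element, together with Lemma \ref{l14}(b), does exactly this. I do not anticipate any serious obstacle; the main care is simply to handle the edge cases $p=y$ and $p=z$ in the converse, which the argument above does explicitly.
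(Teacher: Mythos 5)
Your proof is correct and follows essentially the same route as the paper: the forward direction uses two atoms below the adjunct element exactly as the paper does, and the converse reduces, via Lemma \ref{l14}(c), to showing $[x]$ contains no atom. The only (harmless) difference is in how the final contradiction is reached in the converse --- you derive $p\le y$ and $p\le z$ so $p\le y\wedge z=0$, whereas the paper compares $x$ with $y$ and uses an atom below $y$; your variant also explicitly handles the degenerate cases $p=y$, $p=z$, which is a small point of extra care.
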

\begin{proof}Suppose $a$ is an adjunct element in $[x]$. Hence $a\leq x$. Since $a$ is an adjunct element, there exist two atoms $p_1,p_2\leq a$. Then $p_1$ and $p_2$ are the required elements. Conversely, suppose that there
    is a pair of vertices $y,z\in V(G_{\{0\}}(L))$ such that $y$ is
    adjacent to $z$ and $x$ is not adjacent to any of $y$ and $z$. Let $p,q$ be atoms such that $p\in [x]$ and $q\leq y$.
    Since $x$ and $y$ are non-adjacent, they are comparable in $L$. The case $x\leq y$ is impossible, since $y$ and $z$ are adjacent but  $x$ and $z$
    are non-adjacent. Hence $y<x$, which further gives $q\leq x$. But $p\wedge q=0$ and $p\in [x]$ gives $q\in N(p)=N(x)$
    which yields $q\wedge x=0$, a contradiction. Hence $[x]$ does not contain an atom. By Lemma \ref{l14}, $[x]$ does not contain an adjunct element.
    \end{proof}
We need the following concept of ordinal sum of two posets.
\begin{defn}Let $P$ and $Q$ be disjoint posets. Let $P\cup Q$ be the union with the inherited order on $P$ and $Q$ such that $p<q$ for all $p\in P$ and $q\in Q$.  Then it forms a poset called the {\it ordinal sum} of $P$ and $Q$ denoted by $P\oplus Q$.\end{defn}

\begin{lem}\label{l16}If an equivalence class $[x]$ does not contain an  adjunct element then the set
 $A_x= \big\{b\in G_{\{0\}}(L)~|~ b\textnormal{\textit{ is an adjunct element of $L$ not adjacent to }}x\big\}$ is
either empty or forms a chain in $L$. In this case, $L=L_1]_0^a C$, where $L_1$ a is lower dismantlable lattice
and $C$ is a chain. Moreover, $a=1$ if and only if  $A_x=\emptyset$, and if $a\neq 1$, then $a$ is the smallest element of $A_x$.
\end{lem}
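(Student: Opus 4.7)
The strategy is to reduce to the case $x=y$, where $y$ is the maximum of the chain $[x]$, analyze the principal filter $[y,1]$ using meet-irreducibility, and finally peel off $[x]$ as an adjoined chain.

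By Lemma \ref{l14}(a), $[x]$ is a chain in $L$, and since $[x]$ has no adjunct element, Lemma \ref{l14}(c) gives that $[x]$ contains an atom. Every element of $[x]$ is therefore join-irreducible, so every element $\leq z$ for $z\in[x]$ lies in $\{0\}\cup[x]$; in particular no adjunct element of $L$ lies at or below any $z\in[x]$. All elements of $[x]$ share the same neighbourhood, so $A_x$ is independent of the representative and I may replace $x$ by the maximum $y$ of $[x]$. By Lemma \ref{400}(a), two distinct nonzero vertices of $G_{\{0\}}(L)$ are non-adjacent if and only if they are comparable in $L$, so $A_x$ is precisely the set of adjunct elements of $L\setminus\{0,1\}$ comparable with $y$; the previous observation restricts these to lie strictly above $y$. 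Using Remark \ref{r1}, $y$ is meet-irreducible, so any two elements of $[y,1]$ have meet $\geq y>0$ and are comparable by Lemma \ref{400}(a). Hence $[y,1]$ is a chain, and so is $A_x$.

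To identify the smallest element of $A_x$, let $y^*$ be the unique upper cover of $y$ in $L$. Since $y$ is the top of its equivalence class, $y^*\notin[x]$, so $N(y^*)\neq N(y)$; a short check using Lemma \ref{400} shows $N(y^*)\subsetneq N(y)$, and any $z\in N(y)\setminus N(y^*)$ satisfies $z<y^*$ and $z\nleq y$, forcing $y^*$ to have at least one lower cover other than $y$. Thus $y^*$ is an adjunct element. If $y^*=1$, then $[y,1]=\{y,1\}$ and $1\notin V(G_{\{0\}}(L))$, so $A_x=\emptyset$; if $y^*\neq 1$, then $y^*\in V(G_{\{0\}}(L))$ and is evidently the minimum of the chain $A_x$. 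Setting $a=y^*$ in the second case and $a=1$ in the first yields the equivalence ``$a=1$ if and only if $A_x=\emptyset$'' and the ``smallest element'' clause in the conclusion.

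For the decomposition $L=L_1]_0^a C$, I appeal to the tree correspondence of Remark \ref{r2}: $[x]$ is a pendant branch of the rooted tree $T$ associated with $L$, attached to $T$ only at $y^*$, so removing $[x]$ yields a smaller rooted tree $T'$ whose corresponding lower dismantlable lattice is $L_1:=L\setminus[x]$. Taking $C$ to be $[x]$ with its induced chain order, the construction of Definition \ref{d1} reassembles $L$ as $L_1]_0^a C$. The main obstacle is this last identification, which I would make rigorous by invoking the Structure Theorem to choose an adjunct representation of $L$ in which $[x]$ appears as one of the adjoined chains — possible because each pair $(0,z_j)$ in an adjunct representation has $z_j$ an adjunct element while $[x]$ contains none — and then deleting that chain to obtain an adjunct representation of $L_1$.
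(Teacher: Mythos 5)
Your argument is correct in substance but takes a genuinely different route from the paper's. The paper fixes an adjunct representation $L=C_0]_0^{x_1}C_1]_0^{x_2}\cdots]_0^{x_n}C_n$, locates $x$ on a chain $C_j$, shows $x\le b$ for every $b\in A_x$, and then obtains the decomposition by a case analysis on whether the minimal element $a$ of $A_x$ lies on $C_j$, in the harder case rebuilding the representation with an ordinal sum $C_j'=C_i\oplus(C_j\backslash [x])$. You instead argue order-theoretically: $[x]$ is the interval from an atom $p$ up to $y=\max[x]$, every nonzero element below a member of $[x]$ stays in $[x]$, and $a$ is identified as the unique upper cover $y^*$ of $y$, which you show is join-reducible (hence an adjunct element) when it is not $1$. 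Your proof that $A_x$ is a chain --- via the observation that any two elements of $[y,1]$ have meet $\ge y>0$ and are therefore comparable by Lemma \ref{400}$(a)$ --- is cleaner than the paper's. The one place you fall short of a complete proof is the decomposition $L=L_1]_0^aC$ itself, which you explicitly flag; moreover, the reason you offer for being able to choose a representation with $[x]$ as an adjoined chain (that attaching points of adjoined chains are adjunct elements while $[x]$ contains none) is not the operative fact, since $[x]$ may sit as a proper lower segment of one of the $C_j$'s. What actually closes the gap --- and what your earlier steps essentially already establish --- is that the only covering relations joining $[x]$ to the rest of $L$ are $0\prec p$ and $y\prec a$; hence $L\backslash[x]$ is a sublattice which is again lower dismantlable (its tree is $T$ minus a pendant path, and $a$ retains a lower cover different from $0$ because it is join-reducible), and the induced order between $L\backslash[x]$ and $[x]$ is exactly that prescribed by Definition \ref{d1} for the pair $(0,a)$. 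Writing out that verification would complete your proof and would be shorter than the paper's case analysis.
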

\begin{proof} Suppose $[x]$ does not contain any  adjunct element. Let
$L=C_0]_0^{x_1}C_1]_0^{x_2}C_2]_0^{x_3}\cdots]_0^{x_n}C_n$ and $x\in C_j$. Consider the set
  $A_x= \big\{b\in L~|~ b\textnormal{ is an adjunct element not adjacent to }x\big\}$ 

 If $A_x=\emptyset$, then
$C_j$ must be joined at $(0,1)$ and all the elements of $C_j$ have same neighbors, {\it i.e.}, $[x]=C_j$.
Then $L=L_1]_0^1C_j$, where
$L_1=C_0]_0^{x_1}C_1]_0^{x_2}\cdots]_0^{x_{j-1}}C_{j-1}]_0^{x_{j+1}}C_{j+1}]_0^{x_{j+2}}\cdots]_0^{x_n}C_n$.

Let $A_x\neq \emptyset$.  Suppose $b\in A_x$. Then $x$ is not adjacent to $b$. Hence $x$ and $b$ are comparable. Suppose $b\leq x$. Since $b$ is an adjunct element, there exists a pair
$y,z\leq b$ such that $y\wedge z=0$ and $x$ is not adjacent to any
of $y$ and $z$. By Corollary \ref{701}, $[x]$
contains an  adjunct element, a contradiction. Therefore
$x\leq b$.

Now, we prove that the elements of $A_x$ forms a chain. Let $b_1, b_2\in A_x$ such that $b_1||b_2$. Then
$b_1\wedge b_2=0$. As $b_1, b_2\in A$, we have $x\leq b_1$ and
$x\leq b_2$. Therefore $x\wedge b_1=0$, \textit{i.e.,} $x$ and
$b_1$ are adjacent,
 a contradiction to the choice of $b_1$. Therefore the elements of $A_x$ forms a chain. 
 
Let $a$ be the smallest element of $A_x$.
We claim that $[x]\subseteq C_j$. Let $y\sim x$, {\it i.e.},
$y\wedge z=0$ if and only if $x\wedge z=0$. As $x\wedge y\neq 0$,
they are comparable. If $y\leq x$, then it gives $y\in C_j$, otherwise
$[x]$ contains an adjunct element, a contradiction. Suppose
$x\leq y$ and $y\in C_i$ for $i\neq j$. By Lemma \ref{400} ($b$), we get $y\geq x_j$, where $x_j$ is an adjunct element. Therefore
there exist two elements
$y_1, y_2\leq x_j$ such that $y_1\wedge y_2=0$. As $x_j\leq y$, we
have $y_1$, $y_2$ are adjacent and $y$ is not adjacent to any of
them. By Corollary \ref{701}, $[y]=[x]$ (as $y\sim x$) contains
an adjunct element, a contradiction to the assumption. Hence
$y\in C_j$, {\it i.e.,} $[x]\subseteq C_j$.

Next, we claim that if $z\in C_j$ such that $z<a$, then $z\sim x$. Let $z\in C_j$ such that $z<a$.
Since $x,z\in C_j$, they are comparable.

If $x\leq z$, then $N(z)\subseteq N(x)$. If there exists $y\in N(x)$
but $y\notin N(z)$, then $y$ and $z$ are comparable. If  $z\leq y$, then $z\wedge x=0$, a contradiction.
 Hence $y\leq z$. Thus $x,y\leq z$ such that $x\wedge y=0$. Hence by Lemma \ref{l14} ($b$), there exists an adjunct element, say  $c\in [z]$,
  such that $x\leq c\leq z$, a contradiction to the minimality of $a$. Hence $x\leq z$ gives $N(x)=N(z)$.
  
  Suppose that $z\leq x$. Then $N(x)\subseteq N(z)$. If $y\in N(z)$ such that $y\notin N(x)$, then $y$ and $x$ comparable.
   If $x\leq y$, then $x\wedge z=0$, a contradiction. Hence $y\leq x$. This together with $z\leq x$, $y\in N(z)$ and by Corollary \ref{701}, we have  $[x]$ contains
   an adjunct element,  again a contradiction. Hence $N(z)=N(x)$, {\it i.e.}, $x\sim z$.

Further, suppose that an adjunct element $x_i$ is non-adjacent to $x$. Hence $x_i$ and $x$ are comparable.
If $x_i\leq x$, then by Corollary \ref{701}, $[x]$ contains an adjunct element, a contradiction. Hence $x< x_i$.
In particular, we have $x<a$.

 Now, we prove that $L=L_1]_0^aC$, where $L_1$ is a lower dismantlable lattice and $C$ is a chain. First, assume
  that $x\notin C_0$, {\it i.e.,} $x\in C_j$ for $j\neq
0$. Since $x<a$, we have the following two
cases.\\
\textbf{Case $(1)$:} If $a\notin C_j$. Since $x<a$ and $a\notin C_j$, by Lemma \ref{400}$(b)$, we get $x_j\leq a$.
By the minimality of $a$, we have $a=x_j$ and no element of $C_j$ is an
adjunct element. In this case
$[x]=C_j$, as all elements of $C_j$ have same neighbors. Define  $L'=L_1]_0^aC_j$ with the induced partial order of $L$,
 where
$L_1=C_0]_0^{x_1}C_1]_0^{x_2}C_2]_0^{x_3}\cdots]_0^{x_{j-1}}C_{j-1}]_0^{x_{j+1}}C_{j+1}]_0^{x_{j+2}}\cdots]_0^{x_n}C_n$.

 We claim that $L'=L$. Clearly, $|L|=|L'|$(as we are playing with the same elements). Let $y_1\leq y_2$ in $L$.
 If $y_1,y_2\notin C_j$ or $y_1,y_2\in C_j$, then $y_1\leq y_2$ in $L'$. Note that $y_2\in C_j$ implies $y_1\in C_j$,
  since no element of $C_j=[x]$ is an
 adjunct element. Suppose $y_1\in C_j$ and $y_2\in C_i$ for $i\neq j$ in $L$. Since $y_1\leq y_2$ and $y_1\notin C_i$,
  we get $x_j\leq y_2$ in $L$. Hence $y_1\leq x_j\leq y_2$ in $L'$. The converse follows similarly.
 Therefore $L=L'=L_1]_0^aC_j$.\\
\textbf{Case $(2)$:} Let $a\in C_j$. Then $a$ and $x$ are on the same chain $C_j$. Since $a$ is an adjunct element;
we have $a=x_i$, for some $i$. Hence $C_i$ is a chain  joined at $a$.

 Define
$L_1=C_0]_0^{x_1}C_1]_0^{x_2}\cdots]_0^{x_{j-1}}C_{j-1}]_0^{x_j}C_j']_0^{x_{j+1}}C_{j+1}]_0^{x_{j+2}}\cdots]_0^{x_{i-1}}C_{i-1}]_0^{x_{i+1}}C_{i+1}\cdots]_0^{x_n}C_n$,
where $C_j'=C_i\oplus(C_j\backslash [x])$ (see Figure \ref{f2}). Since $a\in C_j$ and $a\notin [x]$, $C_j\backslash [x]\neq \emptyset$.
Let
$L'=L_1]_0^{a=x_i}C$, where $C=[x]$ is a chain(by Corollary \ref{701}) under the induced partial order of $L$. We claim that $L=L'$.
 Observe
that $|L|=|L'|$ (as we are playing with the same elements).

 Suppose
$y_1\leq y_2$ in $L$. If they belong to the same chain, say $C_k$
($k\neq j$) in $L$, then $y_1\leq y_2$ in $L'$ also.

 Suppose
$y_1,y_2\in C_j$. If $y_1,y_2\sim x$, then $y_1,y_2\in C$ in $L'$
implies $y_1\leq y_2$ in $L'$. If $y_1,y_2\nsim x$, then
$y_1,y_2\in C_j'$ in $L_1$ implies $y_1\leq y_2$ in $L'$. Suppose
$y_1\sim x$ and $y_2\nsim x$, then $y_1\in C$ and $y_2\in C_j'$ in
$L'$. As $y_1\sim x$, $y_2\in C_j'$ and $a$ is an adjunct element, we have $y_1<a$ and
$y_2\geq a$, which gives $y_1\leq y_2$ in $L'$.

Now, suppose $y_1$ and $y_2$ are on different chains, say $y_1\in
C_p$ and $y_2\in C_k$ with $p\neq k$. As $y_1\leq y_2$ in $L$, we get
$y_2\geq x_p$ (as $C_p$ is joined $x_p$) hence $k<p$. Also
$y_1\leq x_p$, hence $y_1\leq x_p\leq y_2$ in $L'$.

Next, suppose  $y_1\leq
y_2$ in $L'$. If they belong to the same chain except $C_j'$, then
$y_1\leq y_2$ in $L$ also. Suppose $y_1, y_2\in C_j'$. If $y_1,
y_2\in C_j\backslash [x]$ or $y_1,y_2\in C_i$, then $y_1\leq y_2$
in $L$. If $y_2\in C_i$, then $y_1\in C_i$, as $y_1\leq y_2$. Let $y_2\in C_j\backslash [x]$ and $y_1\in C_i$. Then $y_1\leq
a\leq y_2$ in $L$. Suppose $y_1$ and $y_2$ are on different chains, say $y_1\in C_p$ and $y_2\in C_k$ with
$p\neq k$ in $L'$. As $y_1\leq y_2$, we have $k<p$ with $y_2\geq
x_p$ and $x_p\geq y_1$  {\it i.e.,} $y_1\leq x_p\leq y_2$ in $L$.

Therefore $L=L'$.

If $x\in C_0$, it gives $a\in C_0$. Then $x$ and $a$ are on the same chain $C_0$, hence by Case (2) above, we
have $L=L_1]_0^aC$, where $C=[x]$.
\end{proof}

 \begin{thm}
    \label{703} Let $L_1$, $L_2$ be lower dismantlable lattices with $1$ as an adjunct element such that
    $G_{\{0\}}(L_1)\cong G_{\{0\}}(L_2)$. Then there is a graph isomorphism
    $\phi:G_{\{0\}}(L_1)\rightarrow G_{\{0\}}(L_2)$  such that $a$ is an
    adjunct element of $L_1$ if and only if $\phi (a)$ is an adjunct element of $L_2$.\end{thm}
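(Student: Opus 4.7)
The plan is to start from any graph isomorphism $\phi_0: G_{\{0\}}(L_1) \to G_{\{0\}}(L_2)$ provided by the hypothesis, and then modify it within equivalence classes so that adjunct elements get paired with adjunct elements. Since the equivalence $\sim$ on vertices is defined by equality of open neighborhoods, and graph isomorphisms preserve open neighborhoods, we have $\phi_0([x]) = [\phi_0(x)]$ for every vertex $x$. Consequently $\phi_0$ induces a bijection between the equivalence classes of $G_{\{0\}}(L_1)$ and those of $G_{\{0\}}(L_2)$.

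The key observation is that by Corollary \ref{701} the property ``$[x]$ contains an adjunct element of $L$'' is equivalent to the purely graph-theoretic condition that there exists an edge $yz$ in $G_{\{0\}}(L)$ with $x$ non-adjacent to both $y$ and $z$. Therefore $\phi_0$ restricts to a bijection between the equivalence classes of $L_1$ that contain an adjunct element and the equivalence classes of $L_2$ that contain an adjunct element. By Lemma \ref{l14}$(a)$, each such class contains \emph{exactly one} adjunct element. So for every class $[x]$ of $G_{\{0\}}(L_1)$ whose unique adjunct element is $a$, the class $[\phi_0(x)]$ of $G_{\{0\}}(L_2)$ carries a unique adjunct element $a'$, which need not equal $\phi_0(a)$.

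The correction is done class by class. For each class $[x]$ of $G_{\{0\}}(L_1)$ containing an adjunct element $a$, let $\sigma_{[x]}$ be the transposition on the set $[\phi_0(x)]$ that swaps $\phi_0(a)$ and $a'$ (the identity if these coincide), and define $\phi(y) := \sigma_{[x]}(\phi_0(y))$ for $y \in [x]$; set $\phi(y) := \phi_0(y)$ for vertices outside every adjunct-containing class. Since all vertices in an equivalence class have identical open neighborhoods in $G_{\{0\}}(L_2)$, any permutation internal to a class is an automorphism of $G_{\{0\}}(L_2)$, so $\phi$ is still a graph isomorphism. By construction $\phi(a) = a'$ for the distinguished adjunct element of each such class, proving one direction of the biconditional; the converse direction follows by applying the same argument to $\phi^{-1}$, using again that ``contains an adjunct element'' is a graph invariant and that each such class has a unique adjunct element.

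There is no serious obstacle here: the bulk of the work is already carried by Corollary \ref{701}, which turns the adjunct-containing property into a graph invariant, and by Lemma \ref{l14}$(a)$, which provides uniqueness within each class. The only point to verify is that adjusting $\phi_0$ by permutations inside equivalence classes does not break the isomorphism property, and this is automatic since equivalent vertices are by definition twins in the graph.
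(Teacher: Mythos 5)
Your proposal is correct and follows essentially the same strategy as the paper: both rest on Corollary \ref{701} (making ``the class contains an adjunct element'' a graph invariant), on Lemma \ref{l14}$(a)$ (uniqueness of the adjunct element within a class), and on the observation that vertices in one equivalence class are twins, so permuting them preserves the isomorphism. The only difference is organizational --- you apply all the corrective transpositions simultaneously, while the paper applies one transposition per step and inducts on the number of adjunct elements whose images fail to be adjunct.
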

 \begin{proof} Let  $f: G_{\{0\}}(L_1) \rightarrow G_{\{0\}}(L_2)$ be a graph isomorphism, where $L_1$ and $L_2$
 be lower dismantlable
    lattices. Since $1$ is an adjunct element, we have $V(G_{\{0\}}(L_i))=L_i\backslash \{0, 1\}$, for $i=1,2$.
    First, we prove that $f([x])=[f(x)]$ for all $x\in V(G_{\{0\}}(L_1))$, where  $f([x])=\{f(a)~|~ a\in [x]\}$
    and $[x]=\{y\in L_1| N(x)=N(y)\}$.

    Let $t\in f([x])$. Then $t=f(a)$ for some $a\in [x]$. Therefore $N(a)=N(x)$ implies $N(f(a))=N(f(x))$ by the graph isomorphism.
     Hence $t\in [f(x)]$. Thus $f([x])\subseteq[f(x)]$.
    Let $f(a)\in [f(x)]$. Hence $N(f(a))=N(f(x))$ implies $N(a)=N(x)$ by the graph isomorphism. Therefore $a\in [x]$
    which yields $f(a)\in f([x])$.
    Hence $[f(x)]\subseteq f([x])$. Thus $[f(x)]= f([x])$.

    It is clear that $y\sim x$ if and only if $f(y)\sim f(x)$. This gives $|[x]|= |[f(x)]|$.

    Now, we claim that $[x]$ contains an adjunct element if and only if $[f(x)]$ contains an adjunct element.
    Suppose $[x]$ contains an adjunct element of $L_1$.
    Then by Corollary \ref{701}, there exist elements $y,z\in L_1$ such that $x$ is not adjacent to any of $y$ and $z$
    with $y\wedge z=0$. Hence $f(x)$ is not adjacent to any of $f(y)$ and $f(z)$ with $f(y)\wedge f(z)=0$. Therefore $[f(x)]$
    contains an adjunct element of $L_2$. Converse follows on the similar lines.\\
    Define $A_f=\{x~|x \textnormal { is an adjunct element of } L_1 \textnormal { and } f(x) \textnormal
    { is not an adjunct element of } L_2\}$. We prove the result by the induction on $|A_f|$.
    
    If $A_f=\emptyset $,
    then $f$ is the required isomorphism.
    
    Let $A_f\neq \emptyset$ and assume the result is true for all  lower dismantlable lattices with $|A_f|<k$. Now, suppose
    $|A_f|=k$.
    Let $A_f=\{x_1,x_2,\cdots,x_k\}$. Since $x_i$ is an adjunct element, $[x_i]$ contains an adjunct element.
    Hence $[f(x_i)]$ contains an adjunct element, but $f(x_i)$ is not an adjunct element, as $x_i\in A_f$.
    So $|[x_i]| =|[f(x_i)]|>1$ for $i=1,2,\cdots,k$. Let $[x_1]=\{x_{11},x_{12},\cdots,x_{1m}\}$ with $x_1=x_{11}$.
    Then $[f(x_1)]=\{f(x_{11}),f(x_{12}),\cdots,f(x_{1m})\}$. Without loss of generality, suppose $f(x_{12})$ be an adjunct element of
     $L_2$.
     
     Next define
    $\phi_1:G_{\{0\}}(L_1)\rightarrow G_{\{0\}}(L_2)$ by $\phi_1(y)=f(y)$, for all $y\in L_1\backslash \{x_{11},x_{12}\}$,
    $\phi_1(x_1)=f(x_{12})$ and $\phi_1(x_{12})=f(x_1)$.
    Then $\phi_1$ is bijective. Let $x$ and $y$ be adjacent in $G_{\{0\}}(L_1)$. Then $|\{x,y\}\cap \{x_{11},x_{12}\}|< 2$.
    Note that $f(x_1)=f(x_{11})$ is not an adjunct element, as $x_{11}$ and $x_{12}$ are non-adjacent.

    If $\{x,y\}\cap \{x_{11},x_{12}\}=\emptyset$, then $x,y\in L_1\backslash \{x_{11},x_{12}\}$. Hence $\phi_1(x)$ and $\phi_1(y)$
    are adjacent in $G_{\{0\}}(L_2)$ by the definition of $\phi_1$.  Suppose $|\{x,y\}\cap \{x_{11},x_{12}\}|=1$.
    Without loss of generality, suppose that $x=x_{11}$.
    As $x_{11}\sim x_{12}$, we have $x_{12}$ and $y$ are adjacent in $G_{\{0\}}(L_1)$. Therefore $f(x_{12})$
    and $f(y)$, {\it i.e.,} $\phi_1(x)$
    and $\phi_1(y)$ are adjacent in $G_{\{0\}}(L_2)$. Also using the above arguments for $\phi_1^{-1}$, we
    have $x$ and $y$ are adjacent in
    $G_{\{0\}}(L_1)$ whenever $\phi_1(x)$ and $\phi_1(y)$ are adjacent in $G_{\{0\}}(L_2)$. Hence $\phi_1$ is a graph isomorphism.

    Let $a\in A_{\phi_1}$. Then $a$ is an adjunct element of $L_1$ and $\phi_1(a)$ is not an adjunct element of $L_2$.
     We claim that $a\notin \{x_{11}, x_{12}\}$. On the contrary, assume that $a\in \{x_{11}, x_{12}\}$. If $a=x_{11}=x_1$,
      then $\phi_1(a)=\phi_1(x_{11})=f(x_{12})$ is an adjunct element of $L_2$, a contradiction. Also  $a=x_{12}$ impossible
      because $a$ is an adjunct element of $L_1$ but $x_{12}$ is not, since each equivalence class contains at the most one
       adjunct element and $x_1=x_{11}$ is an adjunct element in $[x_1]$ with $x_{12}\in [x_1]$.
       Therefore $a\notin \{x_{11}, x_{12}\}$. This gives $f(a)=\phi_1(a)$ is not an adjunct element of $L_2$,
       hence $a\in A_f$. Thus $A_{\phi_1}\subseteq A_f$. But $x_1\in A_f$ with $x_1\notin A_{\phi_1}$.
       This gives $A_{\phi_1}\subsetneqq A_f$. Therefore $|A_{\phi_1}|< |A_f|$.
    Hence by induction, there is an isomorphism $\phi:G_{\{0\}}(L_1)\rightarrow G_{\{0\}}(L_2)$ such that $a$ is
    an adjunct element of $L_1$  if and only if $\phi (a)$ is an adjunct element
    of $L_2$ . Hence the result.
 \end{proof}

\begin{thm}
\label{706}Let $L_1$ and $L_2$ be lower dismantlable lattices with $1$ as adjunct element. If
$\phi:G_{\{0\}}(L_1)\rightarrow G_{\{0\}}(L_2)$ is a graph  isomorphism
such that $a$ is an adjunct element in $L_1$ if and only if $\phi(a)$ is an adjunct element in $L_2$. Then there exists an
isomorphism $\psi:L_1\rightarrow L_2$ such that $\psi_{|_X}\equiv
\phi_{|_X}$, where $X$ is the set of adjunct elements of $L_1$
different from $1$. Moreover, for any equivalence class $[x]$ in
$G_{\{0\}}(L_1)$, we have $\psi([x])=\phi([x])$.
\end{thm}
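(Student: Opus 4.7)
The plan is to define $\psi$ explicitly on each equivalence class and then verify it is an order isomorphism, using a graph-theoretic characterization of the lattice order. I set $\psi(0)=0$ and $\psi(1)=1$, and for each equivalence class $[x]$ of $G_{\{0\}}(L_1)$, I let $\psi|_{[x]}\colon [x]\to\phi([x])$ be the unique order isomorphism between these two finite chains (they have equal size by Lemma~\ref{l14}(a) and the bijectivity of $\phi$). Lemma~\ref{l14}(b) ensures that the unique adjunct element of $[x]$, if any, is the minimum of $[x]$; by the hypothesis on $\phi$, the unique adjunct element of $\phi([x])$ is $\phi$ of that element and is the minimum of $\phi([x])$. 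Consequently $\psi|_{X}\equiv\phi|_{X}$ and $\psi([x])=\phi([x])$ by construction. Since $\psi$ is manifestly a bijection between the finite lattices $L_1$ and $L_2$, it suffices to show $\psi$ is an order isomorphism.

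The crucial step is the following graph-theoretic characterization of order: for $u,v\in L_i\setminus\{0,1\}$ lying in \emph{distinct} equivalence classes of $G_{\{0\}}(L_i)$, $u\leq v$ in $L_i$ if and only if $N(v)\subsetneq N(u)$ in $G_{\{0\}}(L_i)$. The forward direction uses the implication $y\wedge v=0\Rightarrow y\wedge u=0$ (valid whenever $u\leq v$), with strictness forced by $N(u)\neq N(v)$ since $u\not\sim v$. For the converse, Lemma~\ref{400}(a) rules out the case $u\|v$: else $u\wedge v=0$, giving $u\in N(v)\subsetneq N(u)$ and hence $u\wedge u=0$, impossible; and the case $v<u$ forces $N(u)\subseteq N(v)$, so $N(u)=N(v)$, contradicting the distinct-class hypothesis.

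Granted this lemma, order preservation by $\psi$ follows. Within a single class $[x]$, the map $\psi|_{[x]}$ is order-preserving by construction. For $u,v$ in distinct classes, $\psi(u)$ lies in $\phi([u])$, so $N(\psi(u))=N(\phi(u))=\phi(N(u))$, and similarly $N(\psi(v))=\phi(N(v))$. Hence $N(v)\subsetneq N(u)$ in $G_{\{0\}}(L_1)$ is equivalent, via the bijection $\phi$, to $N(\psi(v))\subsetneq N(\psi(u))$ in $G_{\{0\}}(L_2)$; since $\psi(u),\psi(v)$ lie in distinct classes of $L_2$, applying the key lemma on both sides yields $u\leq v\Leftrightarrow\psi(u)\leq\psi(v)$, and comparability with $0$ or $1$ is immediate. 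The main conceptual hurdle will be spotting this neighborhood-based criterion: once one sees it, the apparent complication that $\psi$ may differ from $\phi$ pointwise within a class dissolves, because every element of an equivalence class has the same neighborhood, so $\psi$ and $\phi$ induce the same map on neighborhoods, and the criterion transfers from $\phi$ to $\psi$ with no further adjustment.
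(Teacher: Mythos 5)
Your proof is correct, and it takes a genuinely different route from the paper's. The paper proves Theorem \ref{706} by induction on the number of vertices: it picks an equivalence class $[x]$ containing no adjunct element, invokes Lemma \ref{l16} to write $L_i=L_i']_0^{a}C$ (peeling off $C=[x]$ attached at a minimal such adjunct element $a$), proves $\phi(a)=a'$ by a minimality argument, applies the induction hypothesis to $L_1'\cong L_2'$, and extends the resulting isomorphism across the adjoined chain, with a nontrivial case analysis according to whether $a$ is still an adjunct element of $L_1'$. Your argument is direct and non-inductive: the neighborhood criterion ($u\le v$ iff $N(v)\subsetneq N(u)$ for vertices in distinct $\sim$-classes) is valid --- the forward direction is the usual monotonicity of annihilators plus $N(u)\ne N(v)$, and the converse correctly uses Lemma \ref{400}(a) to exclude $u\parallel v$ and the forward direction to exclude $v<u$ --- and together with the facts that $V(G_{\{0\}}(L_i))=L_i\setminus\{0,1\}$ (Lemma \ref{400}(c), since $1$ is an adjunct element), that each class is a chain (Lemma \ref{l14}(a)), and that all members of a class have identical comparabilities with elements outside the class, it shows the order on $L_i\setminus\{0,1\}$ is recoverable from the graph; the class-wise unique chain isomorphisms therefore glue to an order (hence lattice) isomorphism. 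Your approach buys a shorter proof that avoids Lemma \ref{l16} entirely, and it isolates the role of the adjunct-preservation hypothesis: it is needed only for the supplementary conclusion $\psi|_X\equiv\phi|_X$ (via Lemma \ref{l14}(b), the adjunct element is the minimum of its class, so the min-to-min chain isomorphism agrees with $\phi$ there), not for the existence of $\psi$ --- which, incidentally, means Theorem \ref{t1} could be deduced without routing through Theorem \ref{703}. What the paper's inductive route buys is that it runs entirely through the structural adjunct decomposition (Lemma \ref{l16}), which is of independent interest and keeps the argument in the same idiom as the rest of the paper.
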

\begin{proof}We use the induction on the number of vertices. By Theorem \ref{403}, $G_{\{0\}}(L_i)$ are connected, for $i=1,2$. We know that if there are only two vertices
then the graphs $G_{\{0\}}(L_i)$, $i=1,2$ are isomorphic to $K_2$
 and therefore the lattices are isomorphic to the power set of two elements. In this case, $1$ is the
 only adjunct element, hence $X=\emptyset$.

Now, suppose $G_{\{0\}}(L_1) \cong G_{\{0\}}(L_2)$ with
$|G_{\{0\}}(L_i)|>2$, for $i=1,2$. Suppose $L_1$ and $L_2$ satisfy the
hypothesis.  Select $x\in L_1$ such that the equivalence class
$[x]$ in $G_{\{0\}}(L_1)$ does not contain any adjunct element of
$L_1$. Note that, by Lemma \ref{l14}, such an equivalence class
$[x]$ exists. Then by the hypothesis $[\phi(x)]$ also does not contain any adjunct
element of $L_2$. By Lemma \ref{l16}, we can write
$L_1=L'_1]_0 ^a C$ and $L_2=L'_2]_0 ^{a'} C'$, where $C=[x]$ and
$C'=[\phi(x)]$ and either $a,a'$ both are corresponding $1$'s of
$L_1$ and $L_2$ respectively or the smallest elements of
corresponding set of all adjunct elements which are comparable
with $x$, $\phi(x)$ respectively.

 We claim that
$\phi(a)=a'$. As $a$ and $x$ are non-adjacent in $G_{\{0\}}(L_1)$,
we have $\phi(a)$ and $\phi(x)$ are non-adjacent in
$G_{\{0\}}(L_2)$. Hence they are comparable in $L_2$ with
$\phi(a)$ as an adjunct element in $L_2$. But $a'$ is the smallest
adjunct element which is not adjacent to $\phi(x)$, hence $a'\leq
\phi(a)$. Let $b\in L_1$ such that $\phi(b)=a'$. Then $b$ is an
adjunct element in $L_1$ since $\phi(a)$ is an adjunct element and if $a'<\phi(a)$, then there exists an
element $\phi(c)<\phi(a)$ such that $a'\wedge \phi(c)=0$, {\it
i.e.,} $\phi(b)\wedge \phi(c)=0$. By the graph isomorphism, $b\wedge c=0$ in $L_1$.
 Also, both $b$ and $c$ are comparable with $a$. The only possibility is $b,c< a$. If $b$ is adjacent to $x$ in
 $G_{\{0\}}(L_1)$, then $a'=\phi(b)$
 is adjacent to $\phi(x)$ in $G_{\{0\}}(L_2)$, a contradiction to the fact that $\phi(x)$ and $\phi(b)$ are comparable,
  as $\phi(x)$ is on the chain $C'$ and $C'$ is joined at $a'=\phi(b)$. Hence $b$ is not adjacent to $x$ in $G_{\{0\}}(L_1)$
 and $b$ is an adjunct element
 with $b<a$, a contradiction to the smallestness of $a$. Hence $a'=\phi(a)$.

  Among all such equivalence classes select
 one $[x]$ for which corresponding
 element $a$ is minimal among such adjunct elements (minimal in the sense that, if $b$ another adjunct element, then
 either $a||b$ or $a\leq b$). Then
 $[\phi(x)]$ is an equivalence class in $G_{\{0\}}(L_2)$ such that $a'$ is minimal among such adjunct elements of $L_2$.
 Next, we consider the following cases for $a$.\\
\textbf{Case $(1)$: } Suppose $a=1$. In this case there is no adjunct pair other than $(0,1)$, hence $X=\emptyset$ and
the result follows by Theorem \ref{704}.\\
\textbf{Case $(2)$: } $a\neq 1$. Then we have $a'\neq 1$, otherwise $1$ is the only adjunct element
in $L_2$(by  the minimality of $a'$). By Theorem \ref{704}, we get $G_{\{0\}}(L_2)$ is complete bipartite,
so is $G_{\{0\}}(L_1)$ and again by Theorem \ref{704}, we get $1$ is the only adjunct element in $L_1$,
 a contradiction to the fact that $a\neq 1$. Thus we must have $a'=1$. Consequently, in the lower
 dismantlable lattices $L'_1$ and $L_2'$, the corresponding greatest elements $1$ are join-reducible.
 Note that $|[x]|=|[\phi(x)]|$, {\it i.e.}, $|C|=|C'|$ follows as in the proof of Theorem \ref{703}. Also
$G_{\{0\}}(L'_1)=G_{\{0\}}(L_1) \backslash [x]\cong
G_{\{0\}}(L_2)\backslash [\phi(x)]=G_{\{0\}}(L'_2)$, under the map
$\phi_{|_{L_1'}}$. Hence by the induction hypothesis,  there exists an
isomorphism $\psi:L'_1\rightarrow  L'_2$ such that $\psi_{|_{X_1}}\equiv
\phi_{|_{X_1}}$, where $X_1$ is the set of adjunct elements of $L_1'$
different from $1$ and for any equivalence class $[y]$ in
$G_{\{0\}}(L_1'), ~\psi([y])=\phi([y])$.

Suppose $a$ is an adjunct
element in $L_1'$. Then $\psi(a)$ is an adjunct element in $L_2'$
with $\psi(a)=\phi(a)=a'$. Since $C$ and $C'$ are chains of same
length both without containing an adjunct element, hence we can extend
$\psi$ to $L_1=L_1']_0^aC$ which gives $L_1\cong L_2$ and
$\psi_{|_X}\equiv \phi_{|_X}$, where $X$ is the set of adjunct
elements of $L_1$ different from $1$, and in this case $X=X_1$. Also, for any equivalence class $[x]$
in $G_{\{0\}}(L_1)$, we have $\psi([x])=\phi([x])$.

 Now, suppose $a$ is not an adjunct element in $L_1'$.
We claim that $[a]$ in $G_{\{0\}}(L_1')$ does not contain any
adjunct element. If $b\in [a]$ be an adjunct element in
$G_{\{0\}}(L_1')$, then by Lemma \ref{l14} ($a$), we get $b<a$ in $L_1'$ and hence in $L_1$. Let $C_a$ be
a chain in the adjunct representation of $L_1$ such that $a\in C_a$
and $y$ is an atom of $L_1$ on the chain $C_a$. As $a,b\in L_1'$ and there
is only one chain  $C$ joined at $a$ in $L_1$, we have
$b\in C_a$. Also $[y]$ in $G_{\{0\}}(L_1)$ does not contain any
adjunct element of $L_1$ and $b$ is an adjunct element in $L_1$ which is
comparable with $y$ such that $b<a$, a contradiction to the choice
of $a$. Hence $[a]$ in $G_{\{0\}}(L_1')$ does not contain any
adjunct element  of $L_1'$.

Similarly, $[a']$ in $G_{\{0\}}(L_2')$ does not
contain any adjunct element of $L_2'$.

Next, for the equivalence class
$[a]$ in $G_{\{0\}}(L_1')$, we have
$\psi([a])=\phi_{|_X}([a])=\{\phi(y)|y\in [a]\}$. Hence
$a'=\phi(a)\in \psi([a])$. Let $\psi(b)=a'$, for $b\in [a]$. Then
$b\sim a$, hence $a$ and $b$ are comparable in $L_1'$. This gives
$\psi(a)$ and $\psi(b)=a'$ are comparable in $L_2'$. Suppose
$\psi(a)<a'$, then we get a contradiction to the minimality  of
$a'$. Similarly, we can not have $a'< \psi(a)$. Hence $\psi(a)=a'$
in this case also. Hence we are done.
\end{proof}

We, now, conclude the paper by  solving  Isomorphism Problem for the class of lower dismantlable lattices.
\begin{proof}[\bf{Proof of Theorem \ref{t1}}]
It is clear that, if the lattices are isomorphic, then the
zero-divisor graphs are isomorphic. Conversely, suppose that the
zero-divisor graphs of lower dismantlable lattices are isomorphic. By Theorem \ref{703}, there
exists an isomorphism $\phi:G_{\{0\}}(L_1)\rightarrow
G_{\{0\}}(L_2)$  such that $a$ is an adjunct element of $L_1$ if
and only if $\phi (a)$ is an adjunct element of $L_2$. Now, by
Theorem \ref{706},
 the lattices $L_1$ and $L_2$
are isomorphic.
\end{proof}
 In view of Theorem \ref{rp} and Remark \ref{r2}, we have the following corollary.
\begin{cor}Let $T_1$ and $T_2$ be two rooted trees. Then $G(T_1)\cong G(T_2)$ if and only if $T_1\cong T_2$.
\end{cor}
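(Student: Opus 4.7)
The plan is to reduce the statement to Theorem \ref{t1} via the correspondence between rooted trees (with root having at least two branches) and lower dismantlable lattices in which $1$ is join-reducible, as described in Theorem \ref{rp} and Remark \ref{r2}.

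The ``only if'' direction is immediate: any rooted tree isomorphism $T_1\to T_2$ preserves the root and the ancestor relation, hence carries non-ancestor pairs to non-ancestor pairs, and therefore induces a graph isomorphism $G(T_1)\cong G(T_2)$. For the ``if'' direction, given rooted trees $T_1, T_2$ with $G(T_1)\cong G(T_2)$, I would apply the construction in Remark \ref{r2}: adjoin a new smallest element $0$ to each $T_i$ joined to all pendant vertices, producing a lower dismantlable lattice $L_i$ whose greatest element is the root $R_i$ of $T_i$. Since $R_i$ has at least two branches, $R_i=1$ is join-reducible in $L_i$, so $L_1,L_2\in\mathscr{L}$ in the notation of Theorem \ref{t1}. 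Moreover, by Remark \ref{r2}, $G_{\{0\}}(L_i)=G(T_i)$, so the hypothesis gives $G_{\{0\}}(L_1)\cong G_{\{0\}}(L_2)$.

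Now I would invoke Theorem \ref{t1} to obtain a lattice isomorphism $\psi:L_1\to L_2$. Since $\psi$ preserves $0$ and $1$, it restricts to an order isomorphism $\psi:L_1\setminus\{0\}\to L_2\setminus\{0\}$, i.e.\ a bijection $T_1\to T_2$ that preserves the root (the maximum) and the partial order inherited from the lattice. By construction, in $L_i\setminus\{0\}$ the order relation $x\leq y$ is exactly ``$x$ is a descendant of $y$ in $T_i$'' (equivalently, ``$y$ is an ancestor of $x$''). Hence $\psi$ preserves the ancestor relation and the root, which is precisely the definition of a rooted tree isomorphism $T_1\cong T_2$.

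I do not anticipate a serious obstacle, since all the hard work is carried by Theorem \ref{t1}. The only point that requires care is the identification of the order induced on $L_i\setminus\{0\}$ by the adjunction of $0$ with the ancestor order on $T_i$; this, however, is precisely the content of the construction in Remark \ref{r2}, together with the observation that the covering relations of $L_i$ restricted to $L_i\setminus\{0\}$ are exactly the edges of $T_i$ oriented from child to parent.
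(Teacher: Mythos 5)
Your proposal is correct and follows exactly the route the paper intends: the paper derives this corollary directly from Theorem \ref{rp}, Remark \ref{r2} and Theorem \ref{t1}, which is precisely the reduction you carry out (with the translation between the ancestor order on $T_i$ and the order on $L_i\setminus\{0\}$ spelled out in a bit more detail).
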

\section{Concluding Remarks} The Isomorphism Problem is one of the  central problems of the zero-divisor graphs of algebraic structures as well as of ordered structures. LaGrange \cite{11} proved that zero-divisor graphs of Boolean rings are isomorphic if and only if the rings are isomorphic. This result was extended by Mohammadian \cite{14} for reduced rings with certain conditions. Recently, Joshi and Khiste \cite{8} proved that the zero-divisor graphs of Boolean posets (Boolean lattices) are isomorphic if and only if  the  Boolean posets (Boolean lattices) are isomorphic. 

It is well known that a Boolean ring can be uniquely determined by a Boolean lattice. Hence Joshi and Khiste \cite{8} extended the result of LaGrange \cite{11} to more general structures, namely Boolean posets. It is easy to observe that every Boolean lattice is an $SSC$ lattice. Joshi et al. \cite{jwp1} proved the Isomorphism Problem for $SSC$-meet semilattices. We feel that with minor modifications, this result is true for $SSC$ posets also.  Thus in nutshell, Joshi et al. \cite{jwp1} extended the result of Joshi and Khiste \cite{8}. From Theorem \ref{ssc}, it is clear that a lower dismantlable lattice $L$ is $SSC$ if and only if the associated basic block of $L$ is $L$ itself. This essentially proves the Isomorphism Problem for the class of lower dismantlable lattices which are basic blocks of itself. This motivate us to prove the Isomorphism Problem for the class of lower dismantlable lattices. Note that neither the class of $SSC$ lattices nor the class of lower dismantlable lattices are contained in each other.

Mohammadian \cite{14} proved the Isomorphism Problem for reduced rings. If $R$ is a reduced commutative ring with unity  then the ideal lattice, the set of all ideals of $R$, is a $0$-distributive lattice, see Joshi and Sarode \cite[Lemma 2.8]{vjs}. Hence one may expect that the Isomorphism Problem may be true for $0$-distributive lattices/posets.  Hence, we raise the following problem.

\begin{prob}
Let $\mathcal{L}$ be the class of  $0$-distributive lattices such that the set of nonzero zero-divisors of $L\in \mathcal{L}$ is $L\setminus\{0,1\}$. 	Is Isomorphism Problem true for the class $\mathcal{L}$?
\end{prob}

\noindent
\textbf{Acknowledgments:} The authors are grateful to the referee for many fruitful suggestions which improved the presentation of the paper. The first author is financially supported by University Grant Commission, New Delhi,
India via minor research project File No. 47-884/14(WRO).

\end{document}